\documentclass[11pt]{amsart}
\usepackage[frame,ps,matrix,arrow,curve,rotate]{xy}
\usepackage{xspace}

\usepackage{setspace}
\newtheorem{theorem}{Theorem}[section]
\newtheorem{proposition}[theorem]{Proposition}
\newtheorem{conjecture}[theorem]{Conjecture}

\newtheorem{lemma}[theorem]{Lemma}
\newtheorem{corollary}[theorem]{Corollary}

\theoremstyle{remark}
\newtheorem{remark}[theorem]{\bf Remark}

\theoremstyle{definition}

\newcommand{\PP}{\mathbb{P}}
\newcommand{\HH}{{\mathcal H}}

\newcommand{\cO}{{\mathcal O}}
\newcommand{\cU}{{\mathcal U}}

\newcommand{\cI}{{\mathcal I}}

\newcommand{\Hilb}{{\rm Hilb}}

\newcommand{\M}{\mathcal M}

\newcommand{\Gr}{\mathop{\rm Gr}}

\begin{document}

\title{Spaces of Rational Curves in Complete Intersections}
\date{}
\author{Roya Beheshti}
\address{Department of Mathematics, Washington University in St. Louis,
St. Louis, Missouri, 63130}
\email{beheshti@wustl.edu}
\author{N.~Mohan Kumar}
\email{kumar@wustl.edu}
\urladdr{http://www.math.wustl.edu/~kumar}

\begin{abstract} We prove that the space of smooth rational curves of
  degree 
$e$ in a general complete intersection of multidegree $(d_1, \dots, d_m)$ 
in $\PP^n$ is irreducible of the expected dimension if $\sum_{i=1}^m
d_i <\frac{2n}{3}$  
and $n$ is large enough. This generalizes the results of Harris, Roth and
Starr \cite{hrs}, and is achieved by proving that the space of conics 
passing through any point of a general complete intersection has constant dimension if 
$\sum_{i=1}^m d_i$ is small compared to $n$. 
\end{abstract}

\maketitle

\section{Introduction}

Throughout this paper, we work over the field of complex numbers. For
a smooth projective variety 
$X \subset \PP^n$ and an integer 
$e \geq 1$, we denote by  ${\Hilb}_{et+1}(X)$ the Hilbert scheme
parametrizing subschemes of  
$X$ with Hilbert polynomial $et+1$, and we denote by 
$R_e(X) \subset {\rm{Hilb}}_{et+1}(X)$ the open subscheme parametrizing 
smooth rational curves of degree $e$ in $X$. If $X=\PP^n$, then $R_e(X)$ is a 
smooth, irreducible, rational variety of dimension $(e+1)(n+1)-4$. But
already in the case of hypersurfaces  
in $\PP^n$,
there are many basic questions concerning the geometry of $R_e(X)$ which are still open.
In this paper we address and discuss some of these questions, focusing in particular on the 
dimension and irreducibility of $R_e(X)$,  when $X$ is a general complete intersection in $\PP^n$.

To study the space of smooth rational curves in $X$, we consider the
Kontsevich moduli space of stable maps $\overline{\M}_{0,0}(X,e)$
which compactifies  
$R_e(X)$ by allowing smooth rational curves to degenerate to morphisms
from nodal curves.  These have certain advantages over the Hilbert
Schemes for the problems studied here.
We refer the reader to 
\cite{bm, ds, fp, hrs} for detailed discussions of these moduli spaces
and the comparison  between them.

For every smooth hypersurface $X \subset \PP^n$ of degree $d$, the dimension of every irreducible 
component of $\overline{\M}_{0,0}(X,e)$ is at least $e(n+1-d)+n-4$, and if $d \leq n-1$, then there is at least one irreducible component whose dimension is equal to $e(n+1-d)+n-4$ (see 
Sections 2.1 and 6). The number $e(n+1-d)+n-4$ is referred to as {\em the expected dimension} of $\overline{\M}_{0,0}(X,e)$.
If $X$ is an arbitrary smooth hypersurface, $\overline{\M}_{0,0}(X)$ (or even $R_e(X)$) can be reducible and its dimension can be larger than expected 
(see \cite{izzet}, Section 1). By a result of 
Harris, Roth, and Starr \cite{hrs}, if $d< \frac{n+1}{2}$ and $X$ is a {\it{general}} hypersurface of degree $d$ in $\PP^n$, then 
for every $e \geq 1$, $\overline{\M}_{0,0}(X,e)$ is integral of the 
expected dimension and has only local complete intersection singularities. In this paper, we generalize this result to higher degree 
hypersurfaces.

Let $\overline{\M}_{0,1}(X,e)$ denote the moduli space of 1-pointed stable maps of degree $e$ to $X$. 
In order to obtain the above mentioned result, Harris, Roth, and Starr show that if $d < \frac{n+1}{2}$ and $X$ is a general hypersurface of degree 
$d$ in $\PP^n$, 
then the  evaluation morphism
$$
ev: \overline{\M}_{0,1}(X,e) \to X
$$
is flat of relative dimension $e(n+1-d)-2$ for every $e \geq 1$ (\cite[Theorem 2.1 and Corollary 5.6]{hrs}). It is conjectured that the same holds for any $d \leq n-1$:

\begin{conjecture}[Coskun-Harris-Starr \cite{izzet}]
Let $X$ be a general hypersurface of degree $d \leq n-1$ in $\PP^n$. Then the evaluation morphism 
$$ev: \overline{\M}_{0,1}(X,e) \to X$$
is flat of relative dimension $e(n+1-d)-2$ for every $e \geq 1$.
\end{conjecture}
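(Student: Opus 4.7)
The plan is to establish flatness of $ev$ by reducing it to equidimensionality of fibers and then appealing to miracle flatness. Since $\overline{\M}_{0,1}(X,e)$ has expected dimension $e(n+1-d)+n-3$ while $X$ is smooth of dimension $n-1$, if we can show (i) the source is Cohen--Macaulay of the expected dimension and (ii) every fiber of $ev$ has dimension exactly $e(n+1-d)-2$, then miracle flatness yields the conclusion. Property (i) should follow by realizing $\overline{\M}_{0,1}(X,e)$ as the zero locus of a section of a natural vector bundle on $\overline{\M}_{0,1}(\PP^n,e)$, so the real issue is (ii).

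I would attack (ii) by induction on $e$, exploiting the boundary divisor structure of the Kontsevich space. If some fiber $ev^{-1}(p)$ had larger than expected dimension, specializing to the boundary would produce a reducible stable map through $p$ consisting of a small-degree ``pointed'' piece carrying $p$ attached to tails of strictly smaller degree. The inductive hypothesis handles the tails, reducing the question to bounding the dimension of the space of lines and conics through an arbitrary $p \in X$.

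The crucial base case is where I expect the real obstruction to live. The space of lines through a point can jump in dimension, or be empty, for special $p$, so lines alone do not suffice; one needs conics as the minimal flexible ``connecting'' object. To show that the space of conics through every $p \in X$ has the expected dimension for general $X$, I would form the incidence variety of triples $(X,p,\Sigma)$ in which $\Sigma$ is an excess-dimensional family of conics through $p$ contained in $X$, and perform a parameter count showing that this incidence variety cannot dominate the Hilbert scheme of hypersurfaces of degree $d$ provided $d$ is not too large relative to $n$. The main difficulty is that as $d$ approaches $n-1$, the expected dimension of conics through $p$ shrinks to almost nothing, so even a tiny excess breaks the parameter count; this is presumably why the conjecture remains open in full generality, and why one should realistically expect this strategy to deliver flatness only in a restricted range (such as the $\sum d_i < 2n/3$ range of the present paper), with the full $d \leq n-1$ statement requiring a genuinely new input beyond conics.
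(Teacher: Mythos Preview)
The statement you are asked to prove is a \emph{conjecture}: the paper does not prove it in full, and explicitly presents it as open. What the paper does prove is the partial case $d<2n/3$, $n\geq 20$ (Theorem~\ref{maincor}). So there is no ``paper's own proof'' of the full statement to compare against, and you yourself correctly note at the end that your strategy should only be expected to deliver a restricted range. In that sense your proposal is honest about its scope.

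Comparing your outline to the paper's proof of its partial result: your reduction via miracle flatness and via induction on $e$ to the base cases $e=1,2$ matches the paper's framework (the paper invokes \cite[Corollary~5.5]{hrs} to reduce to $e\leq E(X)\leq 2$ when $d<2n/3$). Where your proposal diverges is the conic step. You describe it as ``form the incidence variety of triples $(X,p,\Sigma)$ with $\Sigma$ an excess-dimensional family of conics through $p$, and perform a parameter count showing it cannot dominate $\HH$.'' That is not what the paper does, and a direct dimension count on such $\Sigma$ would be hard to make effective: one would need uniform upper bounds on the dimension of hypersurfaces containing a given excess family of conics, and such families are not easy to parametrize. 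The paper's actual mechanism is different and more delicate. If the family of conics through $p$ has excess dimension, then for a general conic $C$ in it one has $H^1(C,N_{C/X}(-p))\neq 0$. A deformation-theoretic argument (Proposition~\ref{summary}) produces a subspace $W\subset H^0(\PP^n,\cO(d)(-p))$ of codimension $\leq n-1$ whose elements, restricted to $C$, lift to $H^0(C,N_{C/\PP^n}(-p))$. The substantive work is then a projective-geometric lemma about conics in $\PP^n$ (Proposition~\ref{zero}): any subspace of $H^0(\cO(d)(-p))$ of codimension less than $\binom{n-d}{2}$ must, when restricted to a general conic in an $(n-d-1)$-dimensional family through $p$, hit every vanishing order at $p$. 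This forces the restriction map $H^0(N_{C/\PP^n}(-p))\to H^0(\cO_C(d)(-p))$ to be surjective, hence $H^1(N_{C/X}(-p))=0$, a contradiction. The numerical condition $n-1<\binom{n-d}{2}$ is exactly where this argument bites, and is the source of the $d<2n/3$ threshold.

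So: your global architecture is right, but the phrase ``parameter count'' undersells and mis-describes the key step. The paper's contribution is precisely the liftability-plus-vanishing-order argument for conics, not a dimension count on families.
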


The above conjecture would imply the following: 

\begin{conjecture}[Coskun-Harris-Starr \cite{izzet}]\label{conjA} 
If $X$ is a general hypersurface of degree $d \leq n-1$ in $\PP^n$, then for every $e \geq 1$, $\overline{\M}_{0,0}(X,e)$ has the expected dimension $e(n+1-d)+n-4$. 
\end{conjecture}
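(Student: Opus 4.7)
The inequality $\dim \overline{\M}_{0,0}(X,e) \geq e(n+1-d)+n-4$ holds for every smooth hypersurface $X$ by standard deformation theory, so the goal is the matching upper bound for general $X$. My plan is a parameter-space dimension count, combined with induction on $e$ to handle the non-generic loci.

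Let $\PP^N$ parametrize degree-$d$ hypersurfaces in $\PP^n$ and consider the incidence variety
\[
\mathcal{I}_e \;=\; \bigl\{(X,[f]) \in \PP^N \times \overline{\M}_{0,0}(\PP^n,e) \,:\, \operatorname{Im}(f)\subset X \bigr\},
\]
with projections $\pi_1:\mathcal{I}_e\to\PP^N$ and $\pi_2:\mathcal{I}_e\to\overline{\M}_{0,0}(\PP^n,e)$. Since $\overline{\M}_{0,0}(\PP^n,e)$ is irreducible of dimension $(e+1)(n+1)-4$, it suffices to show every irreducible component of $\mathcal{I}_e$ has dimension at most $N+e(n+1-d)+n-4$; then for general $X$ the fiber $\pi_1^{-1}(X)=\overline{\M}_{0,0}(X,e)$ has dimension at most $e(n+1-d)+n-4$, matching the expected lower bound.

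Stratify $\overline{\M}_{0,0}(\PP^n,e)$ by the combinatorial type of $[f]$: the dual graph of the domain, the degree of each component, and whether each component maps birationally or as a multiple cover. Over each stratum $\tau$ the fiber of $\pi_2$ is projectively the linear system of hypersurfaces containing $f(C)$, whose codimension in $\PP^N$ can be read off from the image. On the open stratum of birational maps from $\PP^1$ the dimension of $\pi_2^{-1}(\tau)$ comes out to exactly $N+e(n+1-d)+n-4$ since $f(\PP^1)$ imposes $ed+1$ independent conditions. For a $k$-fold cover of a degree-$e'$ curve with $e=ke'$, a direct count yields $N+e'(n+1-d)+n+2k-6$, which is bounded by the birational contribution provided $e'(n+1-d)\geq 2$; this holds whenever $d\leq n-1$. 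Boundary strata whose domain components have distinct images similarly contribute at most $N+e(n+1-d)+n-5$.

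The principal obstacle is a mixed stratum: nodal domain curves several of whose components all map into the \emph{same} lower-degree rational image $\Gamma\subset \PP^n$. There the fiber codimension drops to $(\deg\Gamma)d+1$ while the source moduli grows with the number of components, and controlling the resulting stratum dimension requires an inductive hypothesis on the dimension of the space of rational curves of intermediate degree passing through prescribed points of a general $X$. This in turn reduces essentially to the Coskun--Harris--Starr evaluation conjecture in lower degrees; the base case $e=2$ (sharp dimension bounds for conics through a point of $X$) is precisely the ingredient established in the present paper in the range $\sum d_i < 2n/3$, and extending these bounds to all $d\leq n-1$ is the hardest and most delicate step.
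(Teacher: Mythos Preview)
The statement you are attempting to prove is Conjecture~\ref{conjA}, which the paper records as an \emph{open conjecture} of Coskun--Harris--Starr; the paper does not contain a proof of it, and indeed only establishes it in the restricted range $d<2n/3$ (Theorem~\ref{maincor}). So there is no ``paper's own proof'' to compare against, and any complete argument would be a new result.

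Your proposal is not a proof but, as you yourself signal in the last paragraph, a reduction to the evaluation conjecture (Conjecture~1.1). That reduction is already explicit in the paper: it states immediately before Conjecture~\ref{conjA} that Conjecture~1.1 implies Conjecture~\ref{conjA}. Your incidence-variety count, once one unwinds the inductive input you need for the mixed strata (control on the dimension of rational curves through prescribed points of a general $X$), is essentially asking for flatness of the evaluation map in lower degrees. So the argument is circular at the level of open problems: you reduce Conjecture~\ref{conjA} to Conjecture~1.1, which is at least as hard.

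There is also a genuine technical gap earlier in the outline. On the ``open stratum of birational maps from $\PP^1$'' you assert that $f(\PP^1)$ imposes $ed+1$ independent conditions on degree-$d$ hypersurfaces. This holds only for $d$-normal rational curves; the locus of non-$d$-normal degree-$e$ rational curves in $\PP^n$ (for instance, curves lying in a low-dimensional linear subspace when $e>n$) is nonempty and over it the $\pi_2$-fibers are strictly larger. Bounding the contribution of that locus is itself a nontrivial problem and is not addressed by your stratification by combinatorial type alone. The same issue recurs in your multiple-cover estimate, where you implicitly assume the image curve is $d$-normal.
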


Coskun and Starr \cite{izzet} show that Conjecture \ref{conjA} holds for $ d< \frac{n+4}{2}$. When $d=n-1$ and $e \geq 2$,  
$\overline{\M}_{0,0}(X,e)$ is reducible for the following reason. By Lemma \ref{expected}, $\overline{\M}_{0,0}(X,e)$ has at least one irreducible component of dimension $2e+n-4$ whose general point parametrizes 
an embedded smooth rational curve of degree $e$ in $X$. On the other hand, the space of 
lines in $X$ has dimension at least $n-2$, and therefore, the space of degree $e$ covers of lines in $X$ has dimension $ \geq (n-2)+(2e-2)$, thus $\overline{\M}_{0,0}(X,e)$ 
has at least 2 irreducible components. It is expected that if $X$ is general, then $\overline{\M}_{0,0}(X,e)$ 
is irreducible  when $d \leq n-2$, and $R_e(X)$ is irreducible when $d \leq n-1$ (see \cite[Conjecture 1.3]{izzet}).

In this paper, we show: 

\begin{theorem}\label{main}
Suppose that $X$ is a general hypersurface of degree $d$ in $\PP^n$. If $$n -1 < {{n-d} \choose {2}},$$ then 
the evaluation morphism $ev: \overline{\M}_{0,1}(X,2) \to X$ 
is flat of relative dimension $2n-2d$.  
\end{theorem}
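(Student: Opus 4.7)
The plan is to reduce flatness to equidimensionality of fibers via miracle flatness. Since $\overline{\M}_{0,1}(X,2)$ carries a perfect obstruction theory it is a local complete intersection (in particular Cohen--Macaulay); combined with smoothness of $X$, it suffices to show that every fiber of $ev$ has dimension exactly $2n-2d$. The lower bound $\dim ev^{-1}(p)\ge 2n-2d$ at every $p\in X$ is standard (Lemma~\ref{expected} and a short dimension count), so the crux is the uniform upper bound $\dim ev^{-1}(p)\le 2n-2d$ for \emph{every} $p\in X$.

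I would stratify $ev^{-1}(p)$ by the type of parametrizing stable map: (i) smooth embedded conics through $p$, (ii) reducible nodal conics, and (iii) degree-$2$ covers of a line through $p$. Strata (ii) and (iii) are controlled by bounds on the Fano scheme $F_p(X)$ of lines in $X$ through $p$, supplied by a separate and easier incidence argument on lines in hypersurfaces; under the hypothesis on $(n,d)$, these bounds keep the contributions of (ii) and (iii) within the desired dimension $2n-2d$. The substantive work is stratum (i).

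For (i), consider
\[
I=\{(\mu,X)\in\overline{\M}_{0,1}(\PP^n,2)\times|\cO_{\PP^n}(d)|:\mathrm{image}(\mu)\subset X\}
\]
and the map $\Phi\colon I\to |\cO_{\PP^n}(d)|\times\PP^n$, $(\mu,X)\mapsto(X,ev(\mu))$. For a smooth pointed conic the restriction $H^0(\PP^n,\cO(d))\to H^0(\mathrm{image}(\mu),\cO(d))$ is surjective onto a $(2d+1)$-dimensional space, so $\dim I^{\mathrm{sm}} = 3n + M - 2d - 1$ with $M = \dim|\cO_{\PP^n}(d)|$, and the generic fiber of $\Phi$ over the incidence $\{(X,p):p\in X\}$ has dimension $2n-2d$. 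A naive count only bounds the jumping locus $J=\{(X,p):\dim\Phi^{-1}(X,p)>2n-2d\}$ by $\dim J\le M+n-2$, which does not force a proper image in $|\cO_{\PP^n}(d)|$. The hypothesis $n-1<\binom{n-d}{2}$ enters here: for each fixed $p$, the locus of hypersurfaces $X\supset\{p\}$ with excess fiber at $p$ has codimension at least $\binom{n-d}{2}$ in $|\cI_p(d)|$, so the union over $p\in \PP^n$ projects to a subvariety of $|\cO_{\PP^n}(d)|$ of codimension $\ge\binom{n-d}{2}-(n-1)>0$, which a general $X$ avoids. Proving the codimension estimate $\binom{n-d}{2}$ is the main obstacle. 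The approach is to parametrize $\mu$ near $p$ as $\mu(t)=p+tv+t^2w$ and decompose the condition $F\circ\mu\equiv 0$ into $2d$ polynomial equations in $(v,w)\in T_pX\times T_p\PP^n$ of increasing degree; an excess family of solutions forces a rank drop in a distinguished block of the associated Jacobian, whose generic codimension is computed to be $\binom{n-d}{2}$ via a Bertini-type argument applied to the universal defining polynomial $F$.
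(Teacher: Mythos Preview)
Your overall shape --- miracle flatness, stratification into smooth conics / reducible conics / double covers, and handling the latter two via bounds on lines through $p$ --- matches the paper. (Minor point: a perfect obstruction theory does not by itself give local complete intersection; the paper instead uses that $\overline{\M}_{0,1}(X,2)$ is the zero locus of a section of a rank-$(2d+1)$ bundle on the smooth stack $\overline{\M}_{0,1}(\PP^n,2)$, hence lci \emph{once} it has the expected dimension, which is exactly what equidimensionality of fibers yields.) The genuine gap is your last paragraph: you assert that for fixed $p$ the locus of $X$ with excess fiber at $p$ has codimension $\ge\binom{n-d}{2}$ in $|\cI_p(d)|$, but the justification --- ``an excess family forces a rank drop in a distinguished block of the associated Jacobian, whose generic codimension is $\binom{n-d}{2}$ via a Bertini-type argument'' --- specifies no block, no rank condition, and no mechanism producing that particular binomial coefficient from the Taylor expansion of $F\circ\mu$. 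This is the entire content of the theorem, and I do not see how to make the sketched Jacobian approach work.

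The paper does not attempt a direct codimension bound on bad hypersurfaces. It argues by contradiction at the level of normal bundles: if a general $X$ has a point $p$ with an excess component $\M\subset ev^{-1}(p)$, then a general smooth conic $C$ in $\M$ has $H^1(C,N_{C/X}(-p))\neq 0$. Because the family of such $(X,p,C)$ dominates the parameter space of hypersurfaces, a deformation-theoretic argument (Proposition~\ref{summary}) produces a subspace $W_{X,p}\subset H^0(\PP^n,\cO_{\PP^n}(d)(-p))$ of codimension $\le n-1$ whose elements, restricted to the general $C$, lie in the image of $\rho\colon H^0(C,N_{C/\PP^n}(-p))\to H^0(C,\cO_C(d)(-p))$. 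The number $\binom{n-d}{2}$ then enters through a projective-geometric statement about conics (Proposition~\ref{zero}): given an $(n-d-1)$-dimensional projective family of smooth conics through $p$ --- extracted from $\M$ --- and a subspace of codimension $<\binom{(n-d-1)+1}{2}=\binom{n-d}{2}$, the restrictions already hit every vanishing order $2\le k\le 2d$ at $p$; a separate easy check handles order~$1$. This forces $\rho$ surjective, hence $H^1(C,N_{C/X}(-p))=0$, a contradiction. So $\binom{n-d}{2}$ arises as $\binom{r+1}{2}$ with $r=n-d-1$, and is proved via Lemma~\ref{codimension} on codimensions of vanishing ideals applied to the cone of tangent lines of the conic family, not from any Jacobian of the defining equation of $X$.
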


A smooth rational curve in $X$ is called {\em free} if its normal bundle in $X$  is globally generated. 
The proof of the above theorem is based on an analysis of the space of non-free conics in $X$ passing through an arbitrary point of $X$.
It seems quite plausible that the same approach can be applied  to the case of cubics or other higher degree rational curves to prove the flatness of 
$ev$ when $d$ and $n$ satisfy the inequality of the theorem, 
but we have not carried out all the details, and we restrict the discussion here to the case of conics.

Theorem \ref{main} along with the results of \cite{hrs} gives  the following:

\begin{theorem}\label{maincor}
If $X \subset \PP^n$ is a general hypersurface of degree $d < \frac{2n}{3}, n \geq 20$, 
then for every $e \geq 1$, 
\begin{itemize}
\item[(a)]The evaluation morphism $\overline{\M}_{0,1}(X,e) \to X$ is flat and of relative dimension $e(n+1-d)-2$. 
\item[(b)] $\overline{\M}_{0,0}(X,e)$ is an irreducible local complete intersection stack of expected dimension $e(n+1-d)+(n-4)$. 
\end{itemize}
\end{theorem}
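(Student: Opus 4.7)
The plan is to combine Theorem \ref{main} with the inductive framework of Harris--Roth--Starr \cite{hrs}. Their proof of flatness of $ev$ under the hypothesis $d<(n+1)/2$ proceeds by induction on $e$: the base cases $e=1,2$ rest on the degree bound, while the inductive step reduces degree $e$ to smaller degrees by smoothing reducible curves. Theorem \ref{main} supplies a new base case for $e=2$ valid in the wider range $d<2n/3$, $n\geq 20$, so the induction can be replayed in this range.

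First, a short numerical check: under $d<2n/3$ and $n\geq 20$ one has $n-d\geq \lceil n/3\rceil$, and one verifies directly that $\binom{n-d}{2}>n-1$ throughout this range (the binding case is $n=20$, $d=13$, where $\binom{7}{2}=21>19$). Hence Theorem \ref{main} applies and $ev:\overline{\M}_{0,1}(X,2)\to X$ is flat of the expected relative dimension $2(n+1-d)-2$. For $e=1$, flatness of the universal family of lines in a general hypersurface of degree $d\leq n-2$ is classical, via a tangent space computation on the Fano scheme of lines through a point.

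For $e\geq 3$, the idea is to run the induction of \cite{hrs}. Assuming flatness of $ev$ in degrees $e'<e$ with expected relative dimension, suppose toward contradiction that some component $Z$ of a fiber $ev^{-1}(p)$ in degree $e$ has dimension strictly greater than $e(n+1-d)-2$. A bend-and-break / degeneration argument lets one replace the general stable map parametrized by $Z$ with a reducible curve $C_1\cup C_2$ through $p$ satisfying $\deg C_i<e$. Applying the inductive dimension bound to each $C_i$ --- and, crucially, invoking the dimension bound on non-free conics through a point provided by Theorem \ref{main} when one of the $C_i$ has degree $2$ --- yields a contradiction. This establishes (a).

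Part (b) then follows from (a) by the arguments of \cite{hrs}: flatness of $ev$ of the expected relative dimension, combined with smoothness of $X$, forces $\overline{\M}_{0,0}(X,e)$ to be a local complete intersection stack of the expected dimension, and irreducibility is deduced from irreducibility of a general fiber of $ev$ together with the irreducibility of $X$. The principal obstacle lies in the inductive step: one must verify that every auxiliary dimension estimate invoked in \cite{hrs} continues to go through when the hypothesis is relaxed from $d<(n+1)/2$ to $d<2n/3$, now that the strengthened base case at $e=2$ supplied by Theorem \ref{main} is available. In particular, one has to check that no step of their argument secretly uses the smaller degree bound in a way not mediated by the base case flatness.
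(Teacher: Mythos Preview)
Your strategy is correct in outline and matches the paper's: verify the base cases $e=1,2$ using Theorem \ref{main} and the classical result for lines, then invoke the machinery of \cite{hrs} for larger $e$ and for irreducibility. However, you miss the key observation that makes the reduction to $e\leq 2$ immediate, and your treatment of the inductive step is unnecessarily tentative as a result.

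The point you overlook is the \emph{threshold degree}
\[
E(X) = \left\lfloor \frac{n+1}{n+1-d} \right\rfloor.
\]
The hypothesis $d < 2n/3$ gives $3d < 2n < 2n+2$, hence $(n+1)/(n+1-d) < 3$ and $E(X)\leq 2$. Now \cite[Corollary 5.5]{hrs} is a black-box statement: if $ev$ is flat of the expected relative dimension for every $1\leq e\leq E(X)$, then it is so for \emph{all} $e\geq 1$. So there is no induction to rerun and nothing to check about whether the HRS argument ``secretly uses the smaller degree bound''; once $e=1,2$ are done, part (a) is finished. Your bend-and-break sketch is in the spirit of how \cite{hrs} establishes Corollary 5.5, but you do not need to re-enter that proof.

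For part (b), the paper does not simply say ``follows from (a) by \cite{hrs}.'' It invokes \cite[Corollary 6.9]{hrs}, which requires verifying in addition that general fibers of $ev$ are irreducible, that $X$ contains a free line, and crucially that $\M_{0,0}(X,e)$ (irreducible-domain maps) is irreducible for each $e\leq E(X)$. The case $e=1$ is the irreducibility of the Fano scheme of lines; the case $e=2$ requires a separate argument (the paper uses the irreducibility of the Hilbert scheme of conics in a general $X$, via an incidence-correspondence argument and a result of de Jong--Starr, citing also \cite{deland}). Your proposal glosses over this step, and it is not automatic from flatness of $ev$.
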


The above results can be generalized to the case of general complete intersections.  
Suppose that $X \subset \PP^n$ is a general complete intersection of multidegree $(d_1, \dots, d_m)$ and set $d = d_1 + \dots + d_m$. If  
$n -m < {{n-d} \choose {2}},$ then the evaluation morphism $ev: \overline{\M}_{0,1}(X,2) \to X$ is flat of relative dimension $2(n-d)$, 
and if $d$ and $n$ are in the range of Theorem \ref{maincor},  
then $\overline{\M}_{0,0}(X,e)$ is an irreducible, local complete intersection stack of expected dimension $e(n+1-d)+n-3-m$ (Theorem \ref{last}).

Along the way to proving Theorem \ref{main}, we obtain the following result on the space of non-free lines in general hypersurfaces.

\begin{theorem}\label{lines}
Let $X$ be a general hypersurface of degree $d$ in $\PP^n$, and let $p$ be an arbitrary point of $X$. For $1 \leq k \leq d-1$, set 
$$
a_k = \min \left \{ a \geq 0 \; | \; {{a+k+2} \choose {k+1}} \geq n \right \} .$$  
Then the family of lines $l$ in 
$X$ passing through $p$ with $h^1(l, N_{l/X}(-1)) \geq k$ has dimension $\leq a_k$.   
\end{theorem}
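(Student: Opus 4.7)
The plan is to reinterpret the cohomological condition $h^1(N_{l/X}(-1)) \ge k$ as a rank-drop condition on an explicit Jacobian matrix and then bound the resulting degeneracy locus via a parameter count on a suitable incidence variety.

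\textbf{Reformulation.} I will place $p = [1{:}0{:}\cdots{:}0]$ and expand the defining equation of $X$ as $f = \sum_{i=0}^{d} x_0^{d-i} F_i(x_1,\ldots,x_n)$ with $F_0 = 0$. Lines through $p$ are parametrized by their direction $v \in \PP^{n-1}$, and $l_v \subset X$ iff $F_i(v) = 0$ for every $i = 1,\ldots,d$. Combining the Euler sequence on $\PP^n$ with the normal bundle sequence $0 \to N_{l_v/X} \to \cO_{l_v}(1)^{n-1} \to \cO_{l_v}(d) \to 0$ on $l_v \cong \PP^1$ and taking cohomology after twisting by $\cO(-1)$, I will identify $h^1(N_{l_v/X}(-1))$ with $d - {\rm rank}\,\bar J(v)$, where $\bar J(v)$ is the $d\times(n-1)$ matrix obtained from $J(v) = (\partial F_i/\partial x_j(v))_{ij}$ after quotienting by the Euler relation. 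The theorem then reduces to $\dim D_k(p) \le a_k$, where $D_k(p) = \{v \in \PP^{n-1} : F_i(v) = 0 \ \forall i,\ {\rm rank}\,\bar J(v) \le d-k\}$.

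\textbf{Incidence and strategy.} Next, form the incidence
$$\mathcal I_k = \{(f,p,v) : p \in V(f),\ l_v \subset V(f),\ {\rm rank}\,\bar J_p(v) \le d-k\} \subset H^0(\PP^n,\cO(d)) \times \PP^n \times \PP^{n-1}$$
and bound the relative dimension of the projection $\mathcal I_k \to H^0(\PP^n,\cO(d))$. Since the bound must hold for every $p$ on a general $X$ (not just a general $p$), the target fiber dimension over a general $f$ is $(n-1) + a_k$ — namely $n-1$ for the choice of $p \in V(f)$ and $a_k$ for the choice of $v \in D_k(p)$.

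\textbf{Main estimate.} The plan is to argue by contradiction: suppose $\dim D_k(p) \ge a_k + 1$ for some general $X$ and some $p \in X$. Pick an irreducible $(a_k+1)$-dimensional $Y \subseteq D_k(p)$ and let $W \subseteq \C^n$ be the linear span of the affine cone over $Y$; cutting by a generic linear section if needed, take $Y$ non-degenerate in $\PP W$, so that $\dim W \ge a_k + 2$. Along $Y$ the polynomials $F_i$ vanish and $\bar J$ drops rank by $k$; prolonging these conditions — differentiating the vanishings $F_i|_Y \equiv 0$ up to order $k$ along $Y$, and differentiating the vanishing $(d-k+1)\times(d-k+1)$ minors of $\bar J$ — produces a system of polynomial identities of degree at most $k+1$ on $W$, whose ambient space has dimension $\binom{\dim W + k}{k+1} \ge \binom{a_k + k + 2}{k+1}$. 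A careful enumeration of the independent identities thus obtained, together with the generality of $X$ and the defining inequality $\binom{a_k+k+2}{k+1} \ge n$ for $a_k$, should yield the desired contradiction. The hardest step will be this prolongation: Euler's identity, the imposed vanishings $F_i|_Y = 0$, and the symmetric structure of the Jacobian minors introduce many trivial dependencies that must be carefully discounted in order for the final count to match the bound $a_k$ exactly.
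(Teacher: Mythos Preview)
Your reformulation of the condition $h^1(N_{l/X}(-1)) \ge k$ as a rank drop $\mathrm{rank}\,\bar J(v) \le d-k$ is correct, and the incidence setup is a natural way to organize the problem. The gap is in your ``main estimate''.

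You never say what the ``polynomial identities of degree at most $k+1$ on $W$'' actually are, nor why having $\binom{a_k+k+2}{k+1} \ge n$ of them forces a contradiction. Differentiating $F_i|_Y \equiv 0$ and the vanishing minors along $Y$ produces constraints on the \emph{coefficients of $f$}, but that parameter space has dimension $\binom{n+d}{d}$; manufacturing $n$ relations there is no contradiction at all. You invoke ``the generality of $X$'' only parenthetically, with no mechanism for how it enters, and you concede that Euler relations and minor symmetries create dependencies you would have to ``carefully discount''---that discounting \emph{is} the proof, and it is absent. A naive dimension count on your $\mathcal I_k$ also falls short: it would at best bound $\dim D_k(p)$ for a \emph{general} $p$, whereas the theorem demands the bound at every $p$ of a general $X$, a point you yourself flag but do not resolve.

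The paper supplies exactly the missing mechanism, and it is deformation-theoretic rather than a jet count. Assuming the bad locus $B = \{([X],p,[l])\}$ dominates the parameter space $\HH$ of hypersurfaces, Proposition~\ref{summary} shows that for a general $([X],p)$ in the image there is a subspace $W_{X,p} \subset H^0(\PP^n,\cO_{\PP^n}(d)(-p))$ of codimension at most $n-1$ whose elements, restricted to any line $l$ in the bad family, lift under $\rho: H^0(l,N_{l/\PP^n}(-p)) \to H^0(l,\cO_l(d)(-p))$. Slicing $W_{X,p}$ by the weight pieces $x_0^{d-i}H^0(\Gamma,\cO_\Gamma(i))$ gives subspaces $W_i$ each of codimension $\le n-1$. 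Now the binomial you identified enters, but as a \emph{codimension}: by Lemma~\ref{codimension}(i), degree-$i$ forms vanishing on the $(a_k{+}1)$-dimensional $Y$ have codimension $\ge \binom{a_k+1+i}{i}$, and for every $i \ge k+1$ this is $\ge \binom{a_k+k+2}{k+1} \ge n > n-1$. Hence each $W_i$ for $i=k+1,\dots,d$ contains a form not vanishing on $Y$, yielding liftable sections in $H^0(\cO_l(d)(-p))$ of vanishing orders $k+1,\dots,d$ at $p$; together with one of order $1$ (from smoothness of $X$ at $p$), the image of $\rho$ has dimension at least $d-k+1$, so $h^1(N_{l/X}(-p)) \le k-1$, the desired contradiction.

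In short, the quantity $\binom{a_k+k+2}{k+1}$ is indeed the crux, but it functions as the codimension of $H^0(I_Y(k+1))$ to be compared against the codimension $n-1$ of the liftable subspace $W_{X,p}$---not as a count of prolonged differential identities. The step you are missing is how to extract $W_{X,p}$ from the generality hypothesis; that is what Proposition~\ref{summary} does, and your prolongation sketch does not replace it.
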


In fact, we can modify the proof of the above theorem to say more in special cases. For example, 
it follows from the proof of the theorem that if $n \leq 5$, then the space of non-free lines through any point of $X$ is at most 
zero dimensional. 
The proof shows that 
if there is a 1-parameter family of non-free lines in $X$ through $p$ parametrized by $C \subset \PP^{n-1}$, 
then $n \geq 2 + 2 \dim$ Linear Span($C$) (see Proposition \ref{linearspan}). Of course, a general hypersurface of degree $\geq 3$ in $\PP^n$, $n \leq 5$, does not contain any 
2-plane, so the dimension of the linear span of $C$ is at least 2. Note that for a general hypersurface $X$ of degree $3 \leq d \leq n-1$, the 
non-free lines in $X$ sweep out a divisor in $X$ (Proposition \ref{sweep}). 

\bigskip

\paragraph{\bf{Acknowledgements}} 
The first named author is grateful to Izzet Coskun for many  helpful conversations and for 
pointing out to us how to make our original proof of Theorem \ref{main} shorter. She also thanks Matt Deland and Jason Starr for useful discussions.
\section{Background and Summary}

\subsection{Preliminaries} 

Fix positive integers $d_1 \leq \dots \leq d_m$, and set $d := d_1 + \dots + d_m$. Let $X$ be a smooth complete intersection of multidegree $(d_1,\dots,d_m)$ in 
$\PP^n$. The Kontsevich moduli space $\overline{\M}_{0,r}(X,e)$ parametrizes 
isomorphism classes of tuples $(C, q_1, \dots, q_r, f)$ where 
\begin{enumerate}
\item $C$ is a proper, connected, at worst nodal curve of arithmetic genus 0. 
\item $q_1, \dots, q_r$ are distinct smooth points of $C$. 
\item $f: C \to X$ is a map of degree $e$ in the sense that the pull
  back of the hyperplane section line bundle of $X$ has total degree
  $e$ on $C$, which further satisfies the following stability
  condition:  any irreducible component of $C$ which is mapped to a  
point by $f$ has at least 3 points which are either marked or nodes. 
\end{enumerate}

\noindent The tuples $(C, q_1, \dots, q_r, f)$ and $(C',q_1',\dots, q_r', f')$ are isomorphic if there is an isomorphism 
$g:C \to C'$ taking $q_i$ to $q_i'$, with $f'\circ g = f$. The moduli space $\overline{\M}_{0,r}(X,e)$ is a proper Deligne-Mumford stack, and the corresponding coarse moduli space 
$\overline{M}_{0,r}(X,e)$ is  a projective scheme. There is an evaluation morphism 
$$ev: \overline{\M}_{0,r}(X,e) \to X^r$$
sending a datum $(C, q_1, \dots, q_r, f)$ to $(f(q_1),\dots, f(q_r))$.  We refer to \cite{bm} and \cite{fp} 
for constructions and basic properties of these moduli spaces. 

The space of first order deformations of the map $f$ with $(C, q_1,\dots, q_r)$ fixed 
can be identified with $H^0(C,f^*T_X)$. If $H^1(C,f^*T_X)=0$ at  a point $(C, q_1,\dots, q_r, f)$, then 
$f$ is unobstructed and the moduli stack is smooth at that point. 
In particular, $\overline{\M}_{0,r}(\PP^n,e)$ is smooth of dimension $(n+1)(e+1)+r-4$ (see Appendix A of 
\cite{ravi} for a brief discussion of the deformation theory of $\overline{\M}_{0,r}(X,e)$).

Denote by $\mathcal C \to \overline{\M}_{0,0}(\PP^n,e)$ the 
universal curve and by $h: \mathcal C \to \PP^n$ the universal map
$$\xymatrix{ \mathcal C \ar[r]^{h} \ar[d]^{\pi} & \PP^n \\ \overline{\M}_{0,0}(\PP^n,e).}$$
For any $d \geq 1$, the line bundle $h^*\cO_{\PP^n}(d)$ is the pullback of a globally generated line bundle and so it is 
globally generated. The first cohomology group of a globally generated line bundle over a 
nodal curve of genus zero vanishes, so by the theorem of 
cohomology and base change \cite[Theorem 12.11]{hartshorne1},
$E:=\pi_*h^*(\oplus_{i=1}^m\cO_{\PP^n}(d_i))$ is a locally free sheaf of rank $de+m$. 

If  $s  \in \oplus_{i=1}^m H^0(\PP^n, \cO_{\PP^n}(d_i))$ is a section whose zero locus is $X$, then 
$\pi_*h^*s$ is a section of $E$ whose zero locus as a closed substack of $\overline{\M}_{0,0}(\PP^n,e)$ 
is $\overline{\M}_{0,0}(X,e)$ (\cite[Lemma 4.5]{hrs}). The number 
$$ \dim \overline{\M}_{0,0}(\PP^n,e) - (de+m) = e(n+1-d)+n-m-3$$
is called the {\em{expected dimension}} of $\overline{\M}_{0,0}(X,e)$. It follows that the dimension of every 
component of $\overline{\M}_{0,0}(X,e)$ is at least the expected dimension, and if the equality holds, then $\overline{\M}_{0,0}(X,e)$ is a local complete intersection substack of $\overline{\M}_{0,0}(\PP^n,e)$. 

Similarly, $\overline{\M}_{0,1}(\PP^n,e)$ is a smooth stack of dimension $(e+1)(n+1)-3$, and 
$\overline{\M}_{0,1}(X,e)$ is the zero locus of a section of a locally free sheaf of rank $de+m$ on $\overline{\M}_{0,1}(\PP^n,e)$. Therefore 
if $\dim \overline{\M}_{0,1}(X,e)= e(n+1-d)+n-m-2$, then it is a local complete intersection stack. 

The number $e(n+1-d)+n-3-m$ can be also obtained as an Euler characteristic: if 
$C$ is a smooth rational curve of degree $e$ in $X$, and if $N_{C/X}$ denotes the normal bundle of $C$ in $X$, then 
$$\chi(N_{C/X}) = \chi(T_X|_C) - \chi(T_C) = e(n+1-d)+n-m-3.$$

\subsection{Outline of proof of Theorem \ref{main}} Let $X$ be a
general hypersurface of degree $d$ in $\PP^n$, and consider 
the evaluation morphism 
$$ev: \overline{\M}_{0,1}(X,e) \to X.$$
To show that $ev$ is flat, it suffices to show that every fiber of $ev$ has dimension $e(n+1-d)-2$ (see the discussion in the beginning of Section 5).
Let $e=2$, and assume to the contrary that there is a point $p$ in $X$ and an irreducible component $\M$ of $ev^{-1}(p)$ whose dimension is 
larger 
than $2(n-d)$. 
It follows from \cite[Theorem 2.1]{hrs} that a general map parametrized by $\M$ is an 
isomorphism onto a smooth conic passing through $p$. Let 
$(\PP^1, f: \PP^1 \to X)$ be a general map parametrized by $\M$, and let $C$ denote 
the image of $f$. 
We have 
$$\chi(N_{C/X}(-p)) = \chi (T_X|_C(-p)) - \chi(T_C(-p)) = 2(n-d).$$ 
By \cite[II, Theorem 1.7]{kollar} the Zariski tangent space to 
$ev^{-1}(p)$ at $(\PP^1, f)$ is isomorphic to $H^0(C, N_{C/X}(-p))$, 
therefore if $\dim \M > 2(n-d)$, 
then $h^0(C, N_{C/X}(-p)) > 2(n-d)$ and $H^1(C, N_{C/X}(-p)) \neq  0$.

There is a short exact sequence 
$$ 
0 \to N_{C/X}(-p) \to N_{C/\PP^n}(-p) \to \cO_C (d)(-p) \to 0,
$$
and we show that under the above assumptions, there is a subspace $W$ of codimension at most 
$n-1$ in $H^0(\PP^n, \cO_{\PP^n}(d)(-p))$ such that 
for any  $w$ in $W$ and for a general stable map $(\PP^1, f: \PP^1 \stackrel{\sim}{\rightarrow} C)$ parametrized by $\M$, 
$w|_C$ is in the image of the map 
$$\rho: H^0(C, N_{C/\PP^n}(-p)) \to H^0(C, \cO_C(d)(-p))$$ 
obtained from the above sequence (Proposition \ref{summary}). We then show in Sections 4 and 5 
that if 
$n <  \min \left( {{n-d} \choose {2}}, (d-1) \lfloor \frac{n-d}{2}
\rfloor+1\right),$  
then the existence of such $W$ implies that for a general $(\PP^1, f: \PP^1 \stackrel{\sim}{\rightarrow} C)$ parametrized by $\M$, the map 
$\rho$ is surjective. Applying the long exact sequence of cohomology to  the above short exact sequence 
we get $H^1(C, N_{C/X}(-p)) = 0$, which is a contradiction.

\section{Deformations of rational curves}

We fix a few notations for normal sheaves first. If $Y$ is a closed
subscheme of a smooth variety $Z$, as usual we write $N_{Y/Z}$ for the normal sheaf of
$Y$ in $Z$. More generally, suppose that $f: Y \to Z$ is a morphism
between  quasi-projective varieties and $Z$ is smooth.  
Denote by $T_Y$ and $T_Z$ the tangent sheaves of $Y$ and $Z$, and denote by 
$N_f$ the cokernel of the induced map $T_Y \to f^*T_Z$. We refer to $N_f$ as the 
{\em normal sheaf}  of $f$. We may sometimes write $N_{f,Z}$ instead
to emphasize the range. If $Y$ and $Z$ are both smooth and $f$ is generically
finite then the  exact sequence  
$ T_Y \to f^*T_Z \to N_{f} \to 0$
is exact on the left. If $g:Z\to T$ is another morphism to a smooth variety $T$, then we get
an exact sequence on $Y$ of normal sheaves,
\begin{equation}\label{normalsheaves}
  N_f\to N_{gf}\to f^*N_g\to 0.
\end{equation}
 
Let $B$ and $X$ be  smooth quasi-projective varieties.
Suppose that  $\pi:Y \to B$ is a smooth projective morphism and denote
by $Y_b$ the fiber over $b\in B$.  Let $F: Y \to B \times X$  be a 
morphism over $B$ such that the restriction of $F$ to every fiber of $\pi$ is 
generically finite. Let $f=F_{|Y_b}:Y_b\to X$ and $p_B$ (resp. $p_X$) be
the projections from $B\times X$ to $B$ (resp. $X$). Notice that
$p_B\circ F=\pi$ and $T_{B\times X}$ is naturally isomorphic to $p_B^*
T_B\oplus p_X^* T_X$. Thus, we have a natural map 
\begin{equation}
  \alpha:\pi^*T_B\to N_F.
\end{equation}

If $b\in B$, then we have 
$N_{F}|_{Y_b} = N_{f}$ using the following commutative diagram.
$$\xymatrix{
& 0 \ar[d] & 0 \ar[d]  \\
0 \ar[r] & T_{Y_b} \ar[d] \ar[r] & f^*T_{X} \ar[r] \ar[d] & N_{f} \ar[d]^{=} \ar[r] & 0 \\
0 \ar[r] & T_{Y}|_{Y_b} \ar[r] \ar[d] & F^*T_{B \times X}|_{Y_b} \ar[r] \ar[d] & N_{F}|_{Y_b}  \ar[r] & 0 \\
& N_{Y_b/Y} \ar[r]^{=} \ar[d] & \pi^*T_{B}|_{Y_b} \ar[d]\\
& 0 & 0 
}
$$

Thus we get, restricting $\alpha$, a map $\pi^*T_{B,b}\to N_f$
and since one has a natural map $T_{B,b}\to H^0(Y_b,\pi^*T_{B,b})$, we
get a map,
\begin{equation}
 \alpha_b: T_{B,b}  \to  H^0(Y_b, N_{f}). 
\end{equation}



\subsection{Morphisms from $\PP^1$ to general complete intersections}
Let $d_1 \leq \dots \leq d_m$ be positive integers.  For the rest of this section, we fix the following notation:

\begin{enumerate}
\item $\mathcal H$ denotes the variety  parametrizing complete intersections in $\PP^n$ which are of multidegree $(d_1, \dots, d_m)$.
\item $\mathcal U \subset \mathcal H \times \PP^n$ denotes the universal family over $\mathcal H$.
\item  $\pi_1: \mathcal U \to \mathcal H$ and $\pi_2: \cU \to \PP^n$ denote the two projection maps. 
\end{enumerate}

\begin{remark}
  If $h\in\HH$ and $\cU_h\subset\PP^n$ is the corresponding scheme, then we have
  the map $\alpha_h:T_{\HH,h}\to H^0(\cU_h, N_{\cU_h/\PP^n})$ as defined in the
  previous section. (Easy to check that this is defined even though
  $\cU\to\HH$ is not smooth). This map is an isomorphism for any $h\in \HH$.
\end{remark}

Suppose that $B$ is an smooth irreducible  quasi-projective variety and
$\psi:B\to \HH$ 
a dominant morphism. Let $\cU_B\subset B\times\PP^n$ be the fiber
product. Let 
$\pi:Y\to B$ be a dominant morphism whose fibers are smooth connected
projective curves, and let $F:Y\to \cU_B$ be a morphism over $B$ which
is generically finite on each fiber of $\pi$. 
We have an exact
sequence using the sequence (\ref{normalsheaves}),
\begin{equation}\label{normalbundle}
0\to N_{F,\cU_B}\to N_{F,B\times \PP^n}\to F^*N_{\cU_B/B\times\PP^n}\to 0.  
\end{equation}
The natural map
$\alpha:\pi^*T_B\to N_{F,B\times \PP^n}$ by composing gives a map 
 \begin{equation}
   \beta:\pi^*T_B\to  F^*N_{\cU_B/B\times\PP^n} .
 \end{equation}
The dominance of $\psi$ and
the above remark show 
that for a general point $b\in B$ and $X$ the corresponding complete
intersection subscheme of $\PP^n$,
one has a surjection $T_{B,b}\to H^0(X,N_{X/\PP^n})$.

Fix a point $b\in B$ and let $C\subset Y$ be the fiber over $b$. Let
$f=F_{|C}$ and let $X$ be the complete intersection scheme
corresponding to $b$. 
Then the exact sequence (\ref{normalbundle}) specializes to
\begin{equation}\label{specialnormal}
0\to N_{f,X}\to N_{f,\PP^n}\to f^*N_{X/\PP^n}=\oplus_{i=1}^m f^*\cO_X(d_i)\to 0 
\end{equation}

\medskip
\begin{proposition}\label{liftable}
If $b$ is a general point of $B$, then the image of the pull-back 
map 

$$
H^0(\PP^n, \oplus_{i=1}^{m}\cO_{\PP^n}(d_i)) \to H^0(C, \oplus_{i=1}^mf^*\cO_{X}(d_i))
$$ 

\medskip\noindent is contained in the image of the map 
$$H^0(C, N_{f,\PP^n}) \to H^0(C, \oplus_{i=1}^mf^*\cO_{X}(d_i))$$
obtained from the above  
short exact sequence.
\end{proposition}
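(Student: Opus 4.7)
The plan is to exploit the map $\beta:\pi^*T_B\to F^*N_{\cU_B/B\times\PP^n}$ introduced just above, which by construction factors as $\pi^*T_B\xrightarrow{\alpha} N_{F,B\times\PP^n}\to F^*N_{\cU_B/B\times\PP^n}$. First I would restrict $\beta$ to the fiber $C$ over a general point $b\in B$ and pass to global sections. Using the identifications $N_{F,B\times\PP^n}|_C\cong N_{f,\PP^n}$ and $F^*N_{\cU_B/B\times\PP^n}|_C\cong f^*N_{X/\PP^n}$ (obtained from the same kind of commutative diagram that yielded $N_F|_{Y_b}\cong N_f$ in the previous subsection), this produces a map
\begin{equation*}
\tilde\beta:T_{B,b}\longrightarrow H^0(C,N_{f,\PP^n})\longrightarrow H^0(C,f^*N_{X/\PP^n})
\end{equation*}
that factors through $H^0(C,N_{f,\PP^n})$ by construction. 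Hence it suffices to prove that the image of the pullback map in the statement is contained in the image of $\tilde\beta$.

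Next I would verify the naturality statement that $\tilde\beta$ agrees with the composition
\begin{equation*}
T_{B,b}\xrightarrow{\,d\psi_b\,}T_{\HH,\psi(b)}\xrightarrow{\,\alpha_{\psi(b)}\,}H^0(X,N_{X/\PP^n})\xrightarrow{\,f^*\,}H^0(C,f^*N_{X/\PP^n}),
\end{equation*}
where the last arrow is pullback along $f:C\to X$. This is essentially formal: because $\cU_B=B\times_{\HH}\cU$, the embedding $\cU_B\subset B\times\PP^n$ and its conormal data are pulled back from $\cU\subset\HH\times\PP^n$ along $\psi\times\mathrm{id}$, so the construction of $\alpha$ (and hence $\beta$) on $B$ pulls back the construction of $\alpha_{\psi(b)}$ on $\HH$.

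Finally, since $\psi$ is dominant and $B$ is smooth, $d\psi_b$ is surjective for $b$ general, while $\alpha_{\psi(b)}$ is an isomorphism by the remark. Consequently the image of $\tilde\beta$ equals the image of the pullback
\begin{equation*}
f^*:H^0(X,\textstyle\bigoplus_{i=1}^m\cO_X(d_i))\longrightarrow H^0(C,\textstyle\bigoplus_{i=1}^m f^*\cO_X(d_i)).
\end{equation*}
Since the map $H^0(\PP^n,\bigoplus\cO_{\PP^n}(d_i))\to H^0(C,\bigoplus f^*\cO_X(d_i))$ of the statement factors through $H^0(X,\bigoplus\cO_X(d_i))$, its image is contained in the image of $\tilde\beta$, and therefore in the image of $H^0(C,N_{f,\PP^n})\to H^0(C,f^*N_{X/\PP^n})$, as required.

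The main obstacle will be the naturality check in the middle step: one has to chase the commutative diagrams defining $\alpha_{\psi(b)}$ on $\HH\times\PP^n$ and $\alpha$ on $B\times\PP^n$ carefully enough to see that restricting $\beta$ to $C$ really does amount to precomposing $\alpha_{\psi(b)}$ with $d\psi_b$ and then pulling back along $f$. Everything else in the argument is a formal consequence of dominance of $\psi$ and the isomorphism statement of the remark.
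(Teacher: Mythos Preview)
Your argument is correct and follows essentially the same route as the paper: both reduce to the commutative square
\[
\xymatrix{
T_{B,b} \ar[r]^-{\alpha_b} \ar@{->>}[d]_{d\psi_b} & H^0(C, N_{f,\PP^n}) \ar[d] \\
T_{\HH,[X]} \cong H^0(X, \oplus_i \cO_X(d_i)) \ar[r]^-{f^*} & H^0(C, \oplus_i f^*\cO_X(d_i)),
}
\]
use dominance of $\psi$ to make $d\psi_b$ surjective at a general point, and invoke the identification of $T_{\HH,[X]}$ with $H^0(X,N_{X/\PP^n})$ from the remark. Your ``naturality check'' is exactly the commutativity of this square, which the paper simply asserts.
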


\begin{proof}
Since $b$ is general, it is a smooth point of $B$, so we have 
$\alpha_b: T_{B,b} \to H^0(C, N_{f,\PP^n}) $ as before.
We have a commutative diagram 
$$
\xymatrix{
T_{B,b} \ar[r]^{\alpha_b} \ar@{>>}[d]^{d  \psi} & H^0(C, N_{f,\PP^n}) \ar[d] \\
T_{\HH,[X]} = H^0(X, \oplus_{i=1}^m\cO_{X}(d_i)) \ar[r] & H^0(C, \oplus_{i=1}^mf^*O_{X}(d_i))}
$$

\noindent and $d \psi$ is surjective since $\psi$ is dominant and $b$ is a general point of $B$, so the result follows.
\end{proof}

\medskip

A curve $C \subset \PP^n$ is called {\em $d$-normal} if the restriction map 
$$
H^0(\PP^n, \cO_{\PP^n}(d')) \to H^0(C, \cO_{C}(d'))
$$ 
is surjective for every $d' \geq d$. 

\begin{corollary}\label{normal}
Suppose that $X \subset \PP^n$ is general complete intersection of multidegree $(d_1,\dots,d_m)$, 
$d_1 \leq \dots \leq d_m$. If $C$ is any $d_1$-normal smooth rational curve 
of degree $e$ in $X$, then $H^1(C,N_{C/X})=0$. 
In particular, $R_e(X)$ is smooth at $[C]$.
\end{corollary}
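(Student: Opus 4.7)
The plan is to show surjectivity of
$$\Psi \colon H^0(C, N_{C/\PP^n}) \longrightarrow H^0\!\bigl(C, \oplus_{i=1}^m\cO_C(d_i)\bigr)$$
arising from (\ref{specialnormal}); once this is established, the long exact sequence together with $H^1(C, N_{C/\PP^n})=0$ forces $H^1(C, N_{C/X})=0$, and smoothness of $R_e(X)$ at $[C]$ then follows from the standard fact that $H^1(C, N_{C/X})$ houses the obstructions to deforming $C$ inside $X$. The vanishing $H^1(C, N_{C/\PP^n})=0$ is immediate: pulling back the Euler sequence to $C\cong\PP^1$ exhibits $T_{\PP^n}|_C$, and hence its quotient $N_{C/\PP^n}$, as a globally generated bundle on $\PP^1$, which therefore splits as a sum of line bundles of nonnegative degree.

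Surjectivity of $\Psi$ would then be deduced by combining Proposition \ref{liftable} with the $d_1$-normality of $C$. Since $d_i \geq d_1$ for every $i$, the restriction map
$$\Phi \colon H^0\!\bigl(\PP^n, \oplus_{i=1}^m\cO_{\PP^n}(d_i)\bigr) \longrightarrow H^0\!\bigl(C, \oplus_{i=1}^m\cO_C(d_i)\bigr)$$
is surjective by $d_1$-normality. Proposition \ref{liftable} applied at the pair $([X],[C])$ in an appropriate family $B \to \HH$ gives $\mathrm{im}(\Phi)\subseteq \mathrm{im}(\Psi)$. Combining the two forces $\Psi$ to be surjective.

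The only real technical obstacle is that Proposition \ref{liftable} is stated for a \emph{general} point of $B$, whereas the conclusion is needed at the possibly special pair $([X],[C])$. I would handle this using the generality of $X$. Let $B$ be an irreducible component of the relative parameter space of smooth rational curves of degree $e$ over $\HH$ that contains $([X],[C])$, and let $\psi\colon B\to\HH$ be the projection. Since $[X]$ is a general point of $\HH$ lying in the image of $\psi$, the map $\psi$ must be dominant (otherwise $\psi(B)$ would be a proper closed subset of $\HH$ containing the general point $[X]$). By generic smoothness in characteristic zero, $\psi$ is smooth over a dense open subset of $\HH$. Since the relative parameter space has only finitely many irreducible components, intersecting these open subsets and removing the images of the non-dominant components produces a dense open $W\subset\HH$ such that for every $X\in W$ and every smooth rational curve $C\subset X$ of degree $e$, the pair $([X],[C])$ is a smooth point of its component $B$ at which $d\psi$ is surjective. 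The argument of Proposition \ref{liftable} then applies verbatim at $b=([X],[C])$, which completes the proof.
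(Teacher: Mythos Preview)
Your approach is the same as the paper's: invoke Proposition~\ref{liftable} together with $d_1$-normality to get surjectivity of $H^0(C,N_{C/\PP^n})\to H^0(C,\oplus_i\cO_C(d_i))$, then read off $H^1(C,N_{C/X})=0$ from the long exact sequence. The paper's proof is three lines and does not explicitly address the ``general $b$'' issue you flag.

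One small technical point in your last paragraph: invoking generic smoothness in the form ``$\psi$ is smooth over a dense open of $\HH$'' requires the source $B$ to be smooth, which you have not checked; a priori the singular locus of a component of the incidence variety could dominate $\HH$, and then your open set $W$ would not see it. The $d_1$-normality hypothesis itself repairs this: over the open locus of $d_1$-normal curves in $R_e(\PP^n)$, each restriction $H^0(\PP^n,\cO(d_i))\to H^0(C,\cO_C(d_i))$ is surjective of constant rank, so the projection $B\to R_e(\PP^n)$ is a product of projective bundles there, and this open piece $B^\circ$ is smooth (in fact irreducible). Since the pair $([X],[C])$ you care about lies in $B^\circ$ by hypothesis, generic smoothness applies to $\psi|_{B^\circ}$ and your argument goes through.
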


\begin{proof}
If $f: \PP^1 \to C \subset X$ is an isomorphism, then $H^1(\PP^1, N_{f,\PP^n})=0$. 
So applying  the long exact sequence of cohomology 
to the sequence of normal sheaves
$$ 
0 \to N_{f,X} \to N_{f,\PP^n} \to \oplus_{i=1}^{m}f^*\cO_{X}(d_i) \to 0,
$$ 
the statement follows from Proposition \ref{liftable}. 
\end{proof}

\medskip

Now, we make a further assumption that there exists a morphism
$\phi:B\to\cU$ so that the composition with the map $\pi_1: \cU\to\HH$ is
$\psi$. This is equivalent to saying that we are given a section
$\sigma$ for the map $\cU_B\to B$.

Next, for a point $([X],p)$ in the image of $\phi: B \to \mathcal U$, we define a subspace $W_{X,p}$ of 
$H^0(X, N_{X/\PP^n})$ as follows.  
For $p \in \PP^n$, let $B_p = (\pi_2\circ \phi)^{-1}(p)$ with the reduced induced structure, and let 
$\HH_p \subset \HH$ be the 
closure of $\psi(B_p)$.
If $([X],p)$ is in the image of $\phi$ , then $[X] \in \HH_p$. We can identify 
$T_{\HH, [X]}$ with $H^0(X, N_{X/\PP^n})$ and we define $W_{X,p}
\subset H^0(X, N_{X/\PP^n} )$ to 
be $T_{\HH_p,[X]}$ under this identification.

Assume now that $b$ is a general point of $B$ and $\phi(b) = ([X],p)$. Since every complete intersection which is 
parametrized by $\HH_p$ contains $p$, 
we have $$W_{X,p} \subset H^0(X, N_{X/\PP^n}(-p)).$$ 
Since $\psi$ is dominant by our assumption, the codimension of $\HH_p$ in $\HH$ is at most $n$. Thus  $W_{X,p}$ is of codimension $\leq n$ 
in $H^0(X, N_{X/\PP^n})$, and it is of codimension $\leq n-m$ 
in $H^0(X, N_{X/\PP^n}(-p))$. 

We make a further assumption on $F:Y\to \cU_B$ that $\sigma(B)\subset
F(Y)$. 
Let $C$ be the fiber of $\pi$ over $b$ and $f:C \to X$ the restriction of $F$ to $C$, so the image of $f$ is a curve which passes through $p$. Let  
$D = (f^{-1}(p))_{\rm{red}} \subset C$. 
Then the image of the pullback map 
$$
W_{X,p} \to H^0(C, f^*N_{X/\PP^n})
$$ 
is contained in $H^0(C, f^*N_{X/\PP^n}(-D))$. 
Consider the 
short exact sequence of normal sheaves (\ref{specialnormal}) twisted
with $\cO_{C}(-D)$:   
$$
0 \to N_{f,X}(-D) \to N_{f,P^n}(-D) \to f^*N_{X/\PP^n}(-D) \to 0.
$$

\noindent We claim that the image of the pull-back  map 
$$
W_{X,p}  \to H^0(C, f^*N_{X/\PP^n}(-D))
$$ 
is contained in the image of the map 
$$H^0(C, N_{f, \PP^n}(-D)) \to H^0(C,f^*N_{X/\PP^n}(-D))$$ 
obtained from the above short exact sequence. Since $b$ is a general point,
and since the morphism $B_p \to \HH_p$ is dominant, the induced map on the 
Zariski tangent spaces $T_{B_p, b} \to T_{\HH_p, [X]}=W_{X,p}$ is surjective. 
Set 
$$
Y_p = \pi^{-1}(B_p),
$$ 
and let $\alpha_b: T_{B_p, b} \to H^0(C, N_{f,\PP^n})$ be the 
natural map corresponding to the morphism $Y_p \to B_p \times \PP^n.$
Set 
$$
Z_p=  (F^{-1}(B_p \times \{p\}))_{\rm{red}} \subset Y_p.
$$
Then the fiber of $Z_p$ over $b$ is $D$. Let $g: D \to X$ be the restriction of $f$ to $D$, and let 
$\beta_b: T_{B_p,b} \to H^0(D, N_{g, \PP^n}) = H^0(D, f^*T_X|_D)$ be the 
natural map corresponding to the morphism 
$ Z_p \to B_p \times \PP^n$.  
Then $\beta_b$ is clearly the zero map. The claim now follows from the following commutative diagram
$$
\xymatrix{
T_{B_p, b} \ar@{>>}[rr]^{} \ar[dd]_0 \ar[rd]^{\alpha}  & & W_{X,p} \subset H^0(X, 
N_{X/\PP^n}) \ar[d]\\
&H^0(C, N_{f,\PP^n}) \ar[d] \ar[r] & H^0(C, f^*N_{X/\PP^n})\\
f^*T_{\PP^n}|_D \ar[r] & N_{f,\PP^n}|_{D}.
}
$$
\bigskip

We summarize the above discussion in the following proposition.

\begin{proposition}\label{summary}
For a general point $([X],p)$ in the image of $\phi$, there is a subspace $$W_{X,p} \subset 
H^0(X, N_{X/\PP^n}(-p))$$ of codimension at most $n-m$ 
and an open subset $B_0 \subset \phi^{-1}([X],p)$ with the following property: 
for every $b\in B_0$, if we denote the fiber of $\pi$ over $b$ by $C$, the 
restriction of $F$ to $C$ by 
$f:C \to X$, and the inverse image of $f^{-1}(p)$ with the reduced induced structure by $D$, then 
for every $w \in W_{X,p}$, $f^*w$ can be lifted to a section of $H^0(C,N_{f,\PP^n}(-D))$. 
\end{proposition}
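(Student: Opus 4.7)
The plan is to formalize the argument given in the paragraphs immediately preceding the statement, assembling three ingredients: (i) the codimension estimate for $W_{X,p}$, (ii) the construction of the natural map $\alpha_b$ of Section 3 applied to the family $Y_p \to B_p$, and (iii) the vanishing of an analogous map $\beta_b$ built from the subfamily over which $F$ maps into $\{p\}$.

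First I would recall $B_p = ((\pi_2\circ\phi)^{-1}(p))_\red$ and $\HH_p = \overline{\psi(B_p)} \subset \HH$, and define $W_{X,p} := T_{\HH_p,[X]}$ under the identification $T_{\HH,[X]} = H^0(X, N_{X/\PP^n})$. Because every complete intersection parametrized by $\HH_p$ contains $p$, the deformations in $W_{X,p}$ preserve the vanishing at $p$, so $W_{X,p} \subset H^0(X, N_{X/\PP^n}(-p))$. Since $\psi$ is dominant, the fibers of $\pi_2\circ\phi$ have codimension at most $n$ in $B$, giving $\codim_\HH \HH_p \leq n$ and hence $\codim W_{X,p} \leq n$ inside $H^0(X, N_{X/\PP^n})$; subtracting the $m$-dimensional quotient $H^0(X, N_{X/\PP^n})/H^0(X, N_{X/\PP^n}(-p))$ yields codimension at most $n-m$ inside $H^0(X, N_{X/\PP^n}(-p))$.

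Next I would take $B_0 \subset \phi^{-1}([X],p)$ to be the (nonempty) open locus where $b$ is a smooth point of $B$, of $B_p$, and of $\pi^{-1}(B_p)$, and where the differential $d\psi|_{B_p}: T_{B_p,b} \twoheadrightarrow T_{\HH_p,[X]} = W_{X,p}$ is surjective; existence follows from generic smoothness and the dominance of $B_p \to \HH_p$. For such $b$, the formalism of Section 3 applied to $Y_p \to B_p$ together with its morphism to $B_p \times \PP^n$ supplies the natural map $\alpha_b : T_{B_p,b} \to H^0(C, N_{f,\PP^n})$, and functoriality of the normal-sheaf sequence gives a commutative square whose top row is $T_{B_p,b} \twoheadrightarrow W_{X,p} \hookrightarrow H^0(X, N_{X/\PP^n})$ and whose bottom row is pullback to $H^0(C, f^*N_{X/\PP^n})$.

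The crux is to show that $\alpha_b(t)$ actually lies in $H^0(C, N_{f,\PP^n}(-D))$ for every $t \in T_{B_p,b}$. Here the assumption $\sigma(B) \subset F(Y)$ is essential: it allows us to form $Z_p := (F^{-1}(B_p \times \{p\}))_\red \subset Y_p$, whose fiber over $b$ is $D$, and to run the same $\alpha$-construction for $Z_p \to B_p \times \PP^n$. Since the image of $Z_p$ in $\PP^n$ is the single point $p$, the resulting map $\beta_b: T_{B_p,b} \to H^0(D, N_{g,\PP^n}) = H^0(D, f^*T_{\PP^n}|_D)$ is identically zero. Naturality of $\alpha$ under the inclusion $Z_p \hookrightarrow Y_p$ (encoded in the commutative diagram displayed in the text) identifies $\alpha_b(t)|_D$ with the image of $\beta_b(t) = 0$ under $f^*T_{\PP^n}|_D \to N_{f,\PP^n}|_D$, so $\alpha_b(t)$ vanishes on $D$, i.e. lifts to a section of $N_{f,\PP^n}(-D)$. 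For any $w \in W_{X,p}$, surjectivity of $d\psi|_{B_p}$ produces a preimage $t$, and then $\alpha_b(t) \in H^0(C, N_{f,\PP^n}(-D))$ maps to $f^*w$, proving the lifting statement. The main technical point is the compatibility of the two $\alpha$-constructions under restriction $Z_p \hookrightarrow Y_p$; once that naturality is in place, everything else is bookkeeping from the preceding setup.
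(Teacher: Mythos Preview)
Your proposal is correct and follows essentially the same route as the paper: the proposition is explicitly labeled as a summary of the preceding discussion, and you have reproduced exactly that discussion---the definition of $W_{X,p}$ as $T_{\HH_p,[X]}$, the codimension bound via dominance of $\psi$, the construction of $\alpha_b$ for the restricted family $Y_p\to B_p$, the vanishing of $\beta_b$ coming from $Z_p\to B_p\times\PP^n$, and the commutative diagram tying them together. Your write-up even corrects a minor typo in the paper (where $f^*T_X|_D$ should be $f^*T_{\PP^n}|_D$, consistent with the diagram that follows).
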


\bigskip

\section{Conics in projective space}

Let $\Hilb_{2t+1}(\PP^n)$ denote the Hilbert scheme of subschemes of $\PP^n$ with 
Hilbert polynomial $2t+1$. In this section we prove the following:

\begin{proposition}\label{zero}
Suppose that $p$ is a point of $\PP^n$ and $R \subset \Hilb_{2t+1}(\PP^n)$ is an irreducible projective subscheme of dimension $r$ such that every 
curve parametrized by $R$ is a smooth conic through $p$.  
Let $W\subset H^0(\PP^n, \cO_{\PP^n}(d)(-p))$ be a subspace of codimension $c$. 
If 
$$c < \min \left ( {{r+1} \choose {2}}, (d-1) \left\lfloor
\frac{r+1}{2} \right\rfloor+1 \right ),$$ 
then for a general $[C] \in R$,
and for every $2 \leq k \leq 2d$, the image of the restriction map  
$$ W \to H^0(C, \cO_C(d))$$
contains a section of $\cO_C(d)$ which has a zero of order $k$ at $p$.  
\end{proposition}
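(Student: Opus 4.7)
The plan is to fix $k\in\{2,\dots,2d\}$ and show that the bad locus
\[
R_k=\{[C]\in R:W|_C\text{ contains no section of vanishing order exactly }k\text{ at }p\}
\]
is a proper closed subvariety of $R$; since $R$ is irreducible, intersecting the open complements $R\setminus R_k$ over $k=2,\dots,2d$ yields a dense open subset of $R$ on which every order in $\{2,\dots,2d\}$ is realized, which is the statement.

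For the setup, I would parametrize each $[C]\in R$ by $\phi_C:\PP^1\to C$ with $\phi_C(0)=p$ and, for $s\in H^0(\PP^n,\cO_{\PP^n}(d)(-p))$, expand $s\circ\phi_C(t)=\sum_{j=1}^{2d}a_j(s,C)\,t^j$ in the affine coordinate $t$ (no constant term since $s$ vanishes at $p$). Each $a_j(\cdot,C)$ is a linear functional on $H^0(\PP^n,\cO_{\PP^n}(d)(-p))$, well defined up to the choice of parameter. A section in $W|_C$ has vanishing order exactly $k$ at $p$ iff some $s\in W$ satisfies $a_1(s,C)=\cdots=a_{k-1}(s,C)=0$ and $a_k(s,C)\neq0$; equivalently, $R_k$ is the locus where $a_k(\cdot,C)|_W$ lies in the span of $a_1(\cdot,C)|_W,\dots,a_{k-1}(\cdot,C)|_W$ in $W^*$.

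To show $R_k\neq R$, I would argue by contradiction. If $R_k=R$, then at each $[C]\in R$ there exist scalars $\lambda_j(C)$ such that
\[
\tilde\ell(C):=a_k(\cdot,C)-\sum_{j<k}\lambda_j(C)\,a_j(\cdot,C)
\]
is a nonzero element of $H^0(\PP^n,\cO_{\PP^n}(d)(-p))^*$ annihilating $W$, so $\tilde\ell(C)$ lies in the $c$-dimensional annihilator $W^\perp$. The key step is to bound from below the dimension of the linear span $\Lambda:=\langle\tilde\ell(C):[C]\in R\rangle\subset H^0(\PP^n,\cO_{\PP^n}(d)(-p))^*$. The two bounds in the hypothesis correspond to two distinct configurations of $R$. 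When the tangent directions at $p$ of the conics in $R$ span a subspace of $\PP(T_p\PP^n)$ of dimension close to $r$, varying the tangent direction produces, modulo the span of lower-order Taylor functionals, at least $\binom{r+1}{2}$ independent elements of $\Lambda$. When the tangent directions are constrained to a smaller locus but the higher-order jets of the conics in $R$ vary, varying the $t^2$-coefficient of $\phi_C$ produces at least $(d-1)\lfloor(r+1)/2\rfloor+1$ independent elements. In either case $\dim\Lambda>c$, contradicting $\Lambda\subset W^\perp$.

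The main obstacle is controlling the auxiliary scalars $\lambda_j(C)$, which are not uniquely determined by $C$ and can absorb part of the variation of the lower-order Taylor functionals, shrinking the effective span. The counting must therefore be performed modulo the varying lower-order spans, which I would organize by choosing a suitable test family of conics in $R$ and computing the rank of a matrix whose rows come from the test family and whose columns come from a basis of $H^0(\PP^n,\cO_{\PP^n}(d)(-p))^*$. The minimum in the hypothesis encodes the dichotomy between the tangent-direction regime and the higher-jet regime; whichever regime governs $R$, the corresponding bound on $\dim\Lambda$ delivers the contradiction.
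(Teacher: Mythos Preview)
Your dual/annihilator framework is a reasonable reformulation, but the proposal has a genuine gap at exactly the point where all the content lies: the lower bound on $\dim\Lambda$. You assert that ``varying the tangent direction produces \dots at least $\binom{r+1}{2}$ independent elements'' and ``varying the $t^2$-coefficient \dots produces at least $(d-1)\lfloor(r+1)/2\rfloor+1$ independent elements,'' but neither claim is argued, and you yourself flag the obstacle (the adversarial $\lambda_j(C)$) without resolving it. Everything hinges on these counts, and ``choose a suitable test family and compute the rank of a matrix'' is not a proof.

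Two specific problems. First, you are implicitly assuming that the tangent directions at $p$ of the conics in $R$ sweep out an $r$-dimensional locus in $\PP(T_p\PP^n)$. This is true, but it is not obvious: it requires the fact that no complete $1$-parameter family of smooth conics through $p$ can share a common tangent line at $p$ (the paper's Lemma~\ref{osculating}, proved via the rigidity lemma). Without this, your ``tangent-direction regime'' count has no footing. Second, your dichotomy between a tangent-direction regime and a higher-jet regime is not how the two bounds actually function. In the paper both bounds enter in handling the \emph{same} range of orders, namely $k=d+1,\dots,2d$ (Proposition~\ref{conic}): the $\binom{r+1}{2}$ bound handles $k\le 2d-2$ via a hyperplane slice of the tangent locus, while the $(d-1)\lfloor(r+1)/2\rfloor+1$ bound handles $k=2d-1,2d$ via a separate construction using $\lfloor(r+1)/2\rfloor$ conics whose spanning $2$-planes meet $\Gamma$ in linearly independent lines (Lemma~\ref{independent}). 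The orders $k=2,\dots,d$ are handled by a simpler argument using only that the tangent locus has dimension $r$.

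The paper's approach is constructive rather than dual: it fixes coordinates with $p=(1{:}0{:}\cdots{:}0)$ and $\Gamma=\{x_0=0\}$, intersects $W$ with the graded pieces $x_0^{d-i}H^0(\Gamma,\cO_\Gamma(i))$, and uses the key observation (Lemma~\ref{order-of-zero}) that if $f\in H^0(\Gamma,\cO_\Gamma(i))$ vanishes to order $j$ at $q_C$ along $l_C$, then $x_0^{d-i}f$ vanishes to order $i+j$ at $p$ along $C$. This reduces the problem to finding forms on $\Gamma$ with prescribed vanishing along the tangent locus and the lines $l_C$, which is where the explicit codimension estimates (Lemma~\ref{codimension}) are applied. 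Your proposal contains none of this geometry, and without it the rank computation you gesture at cannot be carried out.
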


We start with  a lemma.

\begin{lemma}\label{osculating}
If $l$ is a line in $\PP^n$ through $p$, then there is no 
complete one-dimensional family of smooth conics through $p$ all tangent to $l$.
\end{lemma}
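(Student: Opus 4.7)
The plan is to realize smooth conics through $p$ tangent to $l$ at $p$ as the complement of two divisors in a $\PP^3$-bundle over $\PP^{n-2}$, and then use a short intersection-theoretic argument to rule out a complete curve in this complement.

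First, I would observe that any smooth conic $C$ with tangent line $l$ at $p$ lies in a unique $2$-plane containing both $p$ and $l$; since such planes form a $\PP^{n-2}$, I can fix coordinates with $p=[1:0:\cdots:0]$, $l=\{x_2=\cdots=x_n=0\}$, and write a varying plane through $l$ in coordinates $(x_0,x_1,v)$. Imposing $F(p)=0$ and tangency to $l$ at $p$, one quickly sees that the conics in such a plane satisfying these conditions form a $\PP^3$ of curves
$$F=b\,x_0v+c\,x_1^2+d\,x_1v+e\,v^2=0,$$
whose discriminant is $-b^2c/4$; the locus $b=0$ corresponds to a union of two lines through $p$, and $c=0$ to $l$ together with another line.

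Second, letting $v$ vary produces a $\PP^3$-bundle $\pi:\mathcal V=\PP(\mathcal E)\to\PP^{n-2}$, and bookkeeping the transformation $(b,c,d,e)\mapsto(b/\lambda,c,d/\lambda,e/\lambda^2)$ under $v\mapsto\lambda v$ identifies
$$\mathcal E\cong\cO\oplus\cO(1)\oplus\cO(1)\oplus\cO(2).$$
The two components $D_b=\{b=0\}$ and $D_c=\{c=0\}$ of the discriminant are cut out by the projections of $\mathcal E$ onto its $x_0v$-summand $\cO(1)$ and its $x_1^2$-summand $\cO$, so their divisor classes are
$$[D_b]=\xi+H,\qquad [D_c]=\xi,$$
where $\xi=c_1(\cO_{\PP(\mathcal E)}(1))$ and $H=\pi^*c_1(\cO_{\PP^{n-2}}(1))$.

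Third, suppose for contradiction that such a complete one-parameter family exists. Passing to the normalization of its base yields a non-constant morphism $f:\bar R\to\mathcal V\setminus(D_b\cup D_c)$ from a smooth complete curve. Since $f(\bar R)$ misses both $D_b$ and $D_c$, the pullbacks $f^*\cO_{\mathcal V}(D_b)$ and $f^*\cO_{\mathcal V}(D_c)$ are trivial line bundles, giving $\deg f^*(\xi+H)=0$ and $\deg f^*\xi=0$; subtracting yields $\deg f^*H=0$. The vanishing of $\deg f^*H$ forces $\pi\circ f$ to be constant (since $H$ is the pullback of an ample class), so $f$ factors through a single fiber $\PP^3$; in that fiber $\xi$ restricts to $\cO_{\PP^3}(1)$, so $\deg f^*\xi=0$ then forces $f$ itself to be constant, contradicting the one-dimensionality of the original family.

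The main technical hurdle is the identification $\mathcal E\cong\cO\oplus\cO(1)^{\oplus 2}\oplus\cO(2)$ and the resulting twist $[D_b]=\xi+H$ (as opposed to a bare $\xi$); once this is correctly set up, the final contradiction is immediate from the nefness of $\xi$ and $H$.
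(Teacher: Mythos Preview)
Your argument is correct and takes a genuinely different route from the paper's proof. The paper argues directly on the total space of the family: after desingularizing the base $B$, it constructs, for each point $q \in l$ with $q \neq p$, a section $\sigma_q$ of the conic bundle $Y \to B$ by taking the second tangent line to $C$ through $q$ and marking its point of tangency. These sections are pairwise disjoint, which forces $Y \cong B \times \PP^1$; since the section $\sigma_p$ is contracted by the evaluation map $g: Y \to \PP^n$, the rigidity lemma shows $g$ factors through $\PP^1$, contradicting the fact that a non-constant family of smooth conics must sweep out a surface. Your approach instead builds the global parameter space $\mathcal V = \PP(\cO \oplus \cO(1)^{\oplus 2} \oplus \cO(2))$ over $\PP^{n-2}$ and kills the putative complete curve by a numerical argument on the divisor classes $[D_b] = \xi + H$ and $[D_c] = \xi$. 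The paper's method is lighter on machinery (only the rigidity lemma) and never needs to identify the bundle $\mathcal E$; your method is more systematic and would adapt readily to similar problems where one wants to rule out complete curves in an explicit quasi-projective moduli space.

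One small correction to your closing sentence: the class $\xi$ is \emph{not} nef on $\mathcal V$ (for instance, the section over a line in $\PP^{n-2}$ given by the $\cO(2)$ summand has $\xi$-degree $-2$). Fortunately your actual argument does not use global nefness of $\xi$: you first use $\deg f^*H = 0$ and the ampleness of $\cO_{\PP^{n-2}}(1)$ to force $\pi \circ f$ constant, and only then invoke $\xi|_{\text{fiber}} = \cO_{\PP^3}(1)$, which is ample. So the proof stands; just drop the word ``nefness'' from the summary.
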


\begin{proof}
Assume to the contrary that there is such a family $B$. By passing to a desingularization we can assume that $B$ is smooth. 
Let $Y \subset B \times \PP^n$ be the universal family over $B$ and $g: Y \to \PP^n$ the projection  map. 

The point $p$ gives a section $\sigma_p$ of the family $Y \to B$. 
Fix a point $q \neq p$ on $l$. Then for every conic $C$ in the family, there is a unique line $l' \neq l$ such that $l'$ passes through $q$ and is tangent to 
$C$. Let $\sigma_q$ be the section of $Y \to B$ such that $\sigma_q([C])$ is the point of intersection of $C$ and $l'$. 
Since if $q_1$ and $q_2$ are two distinct points of $l$, $\sigma_{q_1}$ and $\sigma_{q_2}$ are disjoint, $Y \simeq B \times \PP^1$. 
Since the section $\sigma_p$ is contracted 
by $g$, by the rigidity lemma \cite{mumford}, 
$g$ factors through the projection map $B \times \PP^1 \to \PP^1$. Thus the image of $g$ should be 
one-dimensional which is a contradiction.   

\begin{remark}
The following argument shows 
that the set-theoretic map $\sigma_q$ defined in the proof of the above
lemma is a section. If $q\neq p$ on $l$,
$B\times\{q\}\cap Y=\emptyset$. Thus the projection from $q$ defines a
morphism $g:Y\to B\times\PP^{n-1}$. For any point $b\in B$, this map is
just a map from a conic to a line and thus $g$ is a two-to-one map to
its image. Let $R\subset Y$ be the ramification locus. Then, the map
$R\to B$ is a double cover. But, $B\times\{p\}\subset R$ and the 
residual part is a section of $Y\to B$ and just $\sigma_q$.
\end{remark}
\end{proof}

\begin{corollary}\label{non-degenerate} Suppose that $R \subset \Hilb_{2t+1}(\PP^n)$ is a closed subscheme such that every 
curve parametrized by $R$ is a smooth conic through $p$. Then,

\begin{enumerate}
\item $\dim R \leq n-1$.
\item If the $2$-planes spanned by the curves parametrized by $R$ all pass through a point $q \neq p$, then $\dim R \leq 1$.
\end{enumerate}
\end{corollary}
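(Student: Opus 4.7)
The plan is to exploit Lemma~\ref{osculating} via the tangent-direction map $\tau\colon R\to\PP(T_p\PP^n)=\PP^{n-1}$, $C\mapsto T_pC$. For (1), a fiber of $\tau$ over $[v]$ is a projective subvariety of the locus of smooth conics through $p$ tangent at $p$ to the line spanned by $v$; by Lemma~\ref{osculating} this locus admits no complete positive-dimensional family, so the fiber is zero-dimensional and $\dim R\leq n-1$.

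For (2), let $L=\overline{pq}$ and let $[v_L]\in\PP^{n-1}$ be the direction of $L$. Since $\Pi_C\supset L$ forces $T_pC\subset\Pi_C$, the tangent direction $\tau(C)$ lies on the line $\PP(T_p\Pi_C)\subset\PP^{n-1}$ through $[v_L]$. Composing $\tau$ with projection from $[v_L]$ yields a morphism $\mu\colon R\to\PP^{n-2}$, $C\mapsto\Pi_C$. Its fiber over $[\Pi]$ is a projective subvariety of the complement of the discriminant hypersurface in $\PP^4$, an affine variety; so this fiber is zero-dimensional and $\dim R=\dim\mu(R)$. It suffices to show $\dim\mu(R)\leq 1$.

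Suppose for contradiction $\dim\mu(R)\geq 2$, and choose a smooth projective surface $B\subset R$ on which $\mu$ is finite onto a surface in $\mu(R)$. Over $B$ consider the universal $\PP^1$-bundle $Y\to B$ with evaluation $F\colon Y\to\PP^n$. Mimicking Lemma~\ref{osculating}, I would construct on a suitable double cover $\tilde B\to B$ several sections of $Y\to\tilde B$: the constant section $\sigma_p$ at $p$; the section $\sigma_L$ sending $b$ to the residual intersection of $L$ with $C_b$ (defined off the finite locus of Lemma~\ref{osculating} where $C_b$ is tangent to $L$ at $p$); and sections $\sigma_1,\sigma_2$ given by the two tangent points from $q$ to $C_b$, ordered by the double cover. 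Direct inspection shows these are pairwise disjoint outside a proper closed subset---in particular, $\sigma_L$ and $\sigma_i$ are \emph{always} disjoint because $L\cdot C_b=2$ prevents $L$ from being simultaneously tangent and secant to $C_b$, and the $\sigma_p$-coincidences happen only on the finite locus of Lemma~\ref{osculating}; the only remaining coincidence $\sigma_1=\sigma_2$ occurs precisely when $q\in C_b$.

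The main obstacle is using these sections to force a contradiction. Ideally, three globally disjoint sections would trivialize $Y\to\tilde B$, so Mumford's rigidity lemma applied to $F$ (which contracts $\sigma_p$) would force $F$ to factor through the $\PP^1$-factor, yielding $\dim F(Y)\leq 1$ and contradicting that $F(Y)$ is a union of a two-parameter family of distinct conics and hence at least three-dimensional. The subtlety is that $\sigma_1=\sigma_2$ happens on a codimension-$1$ subvariety of $\tilde B$, so the sections are only generically disjoint. To finish rigorously one either restricts to a generic complete curve in $B$ on which the coincidences become discrete and carefully extends the trivialization across these points, or performs an induction on $n$: if $\mu(R)$ is contained in a proper linear subspace of $\PP^{n-2}$, then all conics lie in a proper linear subspace of $\PP^n$ and the statement follows by induction.
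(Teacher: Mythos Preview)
Your argument for part~(1) is correct and matches the paper's: the tangent-direction map $\tau$ has finite fibers by Lemma~\ref{osculating}, so $\dim R\le\dim\PP^{n-1}=n-1$.

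For part~(2) your proof is incomplete, and the gap you yourself flag is genuine. Your plan is to trivialize the universal $\PP^1$-bundle over a surface $B$ using several sections and then invoke rigidity, but the sections you construct are not globally disjoint (e.g.\ $\sigma_p=\sigma_L=\sigma_i$ on the locus where $C_b$ is tangent to $L$ at $p$, and $\sigma_1=\sigma_2$ where $q\in C_b$), so the trivialization does not go through. Neither of your suggested fixes is actually carried out: restricting to a generic curve still leaves finitely many coincidence points that prevent a global splitting, and the ``induction on $n$'' has no base case.

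The paper's argument for (2) is much shorter and bypasses all of this. Let $l=\overline{pq}$. Since every plane $\Pi_C$ contains both $p$ and $q$, it contains $l$; hence $l$ lies in the plane of $C$ and $C\cdot l$ is a degree-$2$ divisor on $l$ containing $p$. By Lemma~\ref{osculating} the conics tangent to $l$ at $p$ form at most a $0$-dimensional locus $R'\subset R$. For $[C]\in R\setminus R'$ the conic meets $l$ at a second point $q'\neq p$, and a fiber-dimension count shows there exists $q'\in l\setminus\{p\}$ with $\dim\{[C]\in R: q'\in C\}\ge r-1$. This is a projective family of \emph{smooth} conics through two fixed points $p,q'$; by \cite[Lemma~5.1]{hrs} any projective one-dimensional family of conics through two fixed points contains a reducible member, so $r-1\le 0$.

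The idea you were missing is simply to intersect each conic with $l$ rather than to take tangents from $q$: this immediately produces a second base point and reduces the question to a standard fact about pencils of conics.
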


\begin{proof}
(a) If $\dim R\geq n$, since the lines through $p$ form a $\PP^{n-1}$, if
we associate for any $r\in R$ the tangent line through $p$ to the
conic corresponding to $r$, then by dimension considerations, there is
a positive dimensional closed subfamily of $R$ with the same tangent
line, which is impossible by the previous lemma. 

(b) Set $ r= \dim  R$. Let $l$ be the line through $p$ and $q$, and let $R'$ be the closed subscheme of $R$ parametrizing 
conics tangent to $l$. By Lemma \ref{osculating} if $R'$ is not empty, it is zero dimensional. Since every conic parametrized by the complement of 
$R'$ intersects $l$ in a point other than $p$, there should be a point $q'\in l$, and a closed subvariety of dimension $r-1$ in $R$ 
parametrizing conics passing through $p$ and $q'$. By \cite[Lemma 5.1]{hrs}, in any projective 
1-dimensional family of conics passing through $p$ and $q$, there should be reducible conics, 
so $r \leq 1$.

\end{proof}
\medskip

With the hypothesis in the Proposition \ref{zero}, we fix the
following notation.
Let $\Gamma$ be a hyperplane in $\PP^n$ which does not pass through $p$. 
Choose a homogeneous system of coordinates for $\PP^n$ so that 
$p = (1:0: \dots :0)$ and $\Gamma = \{x_0=0\}$.  For $[C] \in R$, denote by 
$q_C$ the intersection of $\Gamma$ with 
the line tangent to $C$ at $p$, and denote by $l_C$ the line of intersection of $\Gamma$ with the $2$-plane  
spanned by $C$.  Notice that $q_C \in l_C$. Denote by $\Sigma$ the cones of lines tangent at $p$ to the conics parametrized by $R$ and denote by $Y$ the intersection of 
$\Sigma$ and $\Gamma$. By Lemma \ref{osculating}, $\dim Y = r$.

For $1 \leq i \leq d$, multiplication by $x_0^{d-i}$ identifies $H^0(\Gamma, \cO_{\Gamma}(i))$ 
with a subspace of $H^0(\PP^n, \cO_{\PP^n}(d)(-p))$. Let $W_i \subset H^0(\Gamma, \cO_{\Gamma}(i))$ be the intersection of $W$ 
with $H^0(\Gamma, \cO_{\Gamma}(i))$ under this identification. Since the codimension of 
$W$ in $H^0(\PP^n, \cO_{\PP^n}(d)(-p))$ is $c$, the codimension of 
$W_i$ in $H^0(\Gamma, \cO_{\Gamma}(i))$ is $\leq c$. 

\begin{lemma}\label{order-of-zero}
If $f \in H^0(\Gamma, \cO_{\Gamma}(i))$ is such that $f|_{l_C}$ has a zero of order 
$j$ at $q_C$, then the restriction of $x_0^{d-i}f$, considered as a section of 
$H^0(\PP^n, \cO_{\PP^n}(d))$, to $C$  has a zero of order 
$i+j$ at $p$.
\end{lemma}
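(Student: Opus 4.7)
The approach will be a direct local computation in adapted coordinates. The setup already fixes $p=(1:0:\cdots:0)$ and $\Gamma=\{x_0=0\}$, so I still have linear freedom in $x_1,\dots,x_n$. My first step is to use this freedom to put the $2$-plane spanned by $C$ at $\{x_3=\cdots=x_n=0\}$ and the tangent line to $C$ at $p$ at $\{x_2=x_3=\cdots=x_n=0\}$, so that $q_C=(0:1:0:\cdots:0)$ and $l_C=\{x_0=x_3=\cdots=x_n=0\}$.

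Next I will parametrize $C$ near $p$. Since $C$ is a smooth conic in the prescribed plane and tangent to $\{x_2=0\}$ at $p$, in the affine chart $x_0=1$ its equation takes the form $x_2 = ax_1^2 + O(x_1^3)$ with $a\neq 0$ (smoothness rules out $a=0$, which would make the conic reducible). Taking $u=x_1$ as a local parameter on $C$ at $p$ gives the explicit parametrization $(x_0,\dots,x_n) = (1,\,u,\,au^2+O(u^3),\,0,\dots,0)$.

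I will then translate the hypothesis on $f$. Because $f$ is a form in $x_1,\dots,x_n$ and the coordinates $x_3,\dots,x_n$ vanish on $C$, the restriction $f|_C$ depends only on $f|_{l_C}(x_1,x_2) = f(x_1,x_2,0,\dots,0)$; the hypothesis that $f|_{l_C}$ vanishes to order $j$ at $q_C=(1:0)$ lets me factor $f|_{l_C} = x_2^{\,j}\,g(x_1,x_2)$ with $g$ homogeneous of degree $i-j$ and $g(1,0)\neq 0$.

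The final step is to substitute. Using homogeneity of $g$, the expression $g(u,\,au^2+O(u^3)) = u^{i-j}\,g(1,\,au+O(u^2))$ is a unit times $u^{i-j}$, and $(au^2+O(u^3))^{\,j}$ is a unit times $u^{2j}$; multiplying, $f|_C(u)$ has leading term of order $i+j$. Since $x_0\equiv 1$ on this chart, $(x_0^{d-i}f)|_C$ has the same order of vanishing, which is exactly the claim. The argument is essentially bookkeeping; there is no real obstacle beyond choosing the coordinates well and noting that $a\neq 0$ follows from smoothness of $C$.
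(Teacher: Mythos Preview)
Your proof is correct and follows essentially the same approach as the paper: both arguments reduce to the $2$-plane $P$ spanned by $C$, use the factorization of $f|_{l_C}$ coming from the order-$j$ zero at $q_C$, and then count the contribution at $p$. The only difference is cosmetic---the paper phrases the final step divisorially (writing the divisor of $x_0^{d-i}f$ on $P$ as $(d-i)l_C+jT+E'$ with $T$ the tangent line and $E'$ a union of $i-j$ lines through $p$ not equal to $T$, then computing $(C\cdot T)_p=2$ and $(C\cdot E')_p=i-j$), whereas you carry out the equivalent computation via an explicit local parametrization.
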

\begin{proof}
Let $P$ be the two plane spanned by $C$. Then the divisor of $x_0$ in
this plane is just $l_C$. The divisor of $f|_{l_C}$ is $jq_C+E$ where
$E$ is an effective divisor of degree $i-j$ whose support does not
contain $q_C$. Then the divisor in $P$
of $x_0^{d-i}f$ is $(d-i)l_C+jT+E'$ where $T$ is the tangent line of
$C$ at
$p$, $E'$ is a union of $(i-j)$ lines passing through $p$, none of
them equal to $T$. Thus, the
order of its
restriction to $C$ at $p$ is just $2j+(i-j)=i+j$.
\end{proof}

\begin{lemma}\label{codimension}
Suppose that $Z$ is a $k$-dimensional irreducible subvariety of $\PP^n$, and let $I_Z$ be the ideal sheaf of $Z$ in $\PP^n$. 
\begin{itemize}
\item[(i)] The codimension of 
$H^0(\PP^n, I_{Z}(t))$ in $H^0(\PP^n, \cO_{\PP^n}(t))$ 
 is at least ${{t+k}\choose{k}}$. 
\item[(ii)] If $k\geq 1$, and if $Z$ spans a linear subvariety of
  dimension $s$ in $\PP^n$, then  
the codimension of $H^0(\PP^n, I_{Z}(t))$ in $H^0(\PP^n, \cO_{\PP^n}(t))$ is at least $st+1$.
\end{itemize}
\end{lemma}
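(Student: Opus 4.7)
I would estimate the dimension of the image of the restriction map $\rho_t:H^0(\PP^n,\cO_{\PP^n}(t))\to H^0(Z,\cO_Z(t))$, which equals the codimension in question.

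For (i), a general $(n-k-1)$-dimensional linear subspace $\Lambda\subset\PP^n$ is disjoint from $Z$ (since $\dim Z=k$), and projection from $\Lambda$ yields a finite surjective morphism $\pi:Z\to\PP^k$. If $\ell_0,\ldots,\ell_k$ are linear forms on $\PP^n$ cutting out $\Lambda$, then $\pi^*\cO_{\PP^k}(1)\cong\cO_Z(1)$ and the substitution $x_i\mapsto\ell_i$ realizes $H^0(\PP^k,\cO_{\PP^k}(t))$ as a subspace of $H^0(\PP^n,\cO_{\PP^n}(t))$. Its image in $H^0(Z,\cO_Z(t))$ is still of dimension $\binom{t+k}{k}$ because $\pi$ is surjective, giving $\dim\mathrm{im}\,\rho_t\geq\binom{t+k}{k}$.

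For (ii), I first reduce to $\Pi=\PP^n$ (i.e.\ $s=n$): the restriction $H^0(\PP^n,\cO(t))\to H^0(\Pi,\cO(t))$ is surjective, so the image of $\rho_t$ is unchanged. I then induct on $t$; the case $t=0$ is trivial. For a general linear form $\ell$ the principal ideal $(\ell)$ is nonzero in the integral domain $R_Z=k[x_0,\ldots,x_s]/I_Z$, so multiplication by $\ell$ on $R_Z$ is injective and
\[
 h_Z(t)-h_Z(t-1)=\dim\bigl(R_Z/\ell R_Z\bigr)_t\geq h_{Z\cap H}(t),
\]
with $H=\{\ell=0\}$; the inequality follows from the containment $(I_Z,\ell)\subseteq I_{Z\cap H}^{\mathrm{sat}}$.

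It remains to show $h_{Z\cap H}(t)\geq s$ for $t\geq 1$. The key input is that for general $H$ the hyperplane section $Z\cap H$ is non-degenerate in $H\cong\PP^{s-1}$. When $\dim Z\geq 2$ this is standard Bertini; when $\dim Z=1$, if $Z\cap H$ were contained in a codimension-two subspace $H'$, then every hyperplane $H''$ in the pencil through $H'$ would satisfy $Z\cap H\subseteq Z\cap H''$ and $|Z\cap H''|\leq\deg Z=|Z\cap H|$, so $Z\cap H''=Z\cap H$ for every $H''$; since the pencil sweeps out $\PP^s$, this forces $Z\subset H'$, contradicting non-degeneracy. Non-degeneracy gives $h_{Z\cap H}(1)=s$, and multiplication by a general linear form (a non-zero-divisor on $R_{Z\cap H}$, since the latter is a domain when $\dim Z\geq 2$ and $Z\cap H$ is a reduced finite set of points when $\dim Z=1$) shows $h_{Z\cap H}$ is non-decreasing. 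Telescoping then gives $h_Z(t)\geq st+1$. The main obstacle throughout is the non-degeneracy of the general hyperplane section, which is the geometric engine of the induction; the remaining steps are essentially formal.
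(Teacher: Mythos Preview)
Your proof of (i) is the same as the paper's: projection from a general complementary linear subspace.

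For (ii) your argument is correct, but the paper takes a different and somewhat more direct route. Rather than inducting on $t$ via hyperplane sections and Hilbert functions, the paper first reduces to the case $k=1$ by choosing an irreducible curve $C\subset Z$ through $s+1$ linearly independent points of $Z$ (so that $C$ has the same span as $Z$, and $I_Z\subset I_C$). It then picks coordinates so that $C\cap\{x_0=x_1=0\}=\emptyset$ and uses the short exact sequence
\[
0\longrightarrow\cO_C^{\oplus(t-1)}\longrightarrow\cO_C(1)^{\oplus t}\xrightarrow{\,(x_0^{t-1},\,x_0^{t-2}x_1,\,\ldots,\,x_1^{t-1})\,}\cO_C(t)\longrightarrow 0
\]
to conclude that the image of $H^0(\PP^n,\cO(1))^{\oplus t}\to H^0(C,\cO_C(t))$ has dimension at least $(s+1)t-(t-1)=st+1$; since this image lies in the image of $\rho_t$, the bound follows.

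Both arguments are short. Yours is the classical Castelnuovo-type induction and is perhaps more conceptual, but it relies on the non-degeneracy of a general hyperplane section (which, by the way, is not literally ``Bertini'' in the $\dim Z\geq 2$ case --- Bertini gives irreducibility, and non-degeneracy is a separate, though equally standard, fact; your own pencil argument for $k=1$ adapts to prove it). The paper's approach avoids both the induction and the hyperplane-section step entirely, trading them for the easy existence of a spanning curve in $Z$ and an explicit Koszul-type resolution.
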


\begin{proof}
(i) If $\pi: Z \to \PP^k$ is a general linear projection, then $\pi$ is a finite map, so 
the induced map $\pi^*: H^0(\PP^k, \cO_{\PP^k}(t)) \to H^0(Z, \cO_Z(t))$ is injective. 
We have a commutative diagram 
$$
\xymatrix{
H^0(\PP^k, \cO_{\PP^k}(t)) \ar@{^{(}->}[r] \ar@{^{(}->}[d]^{\pi^*} & H^0(\PP^n, \cO_{\PP^n}(t)) \ar[dl]\\
H^0(Z, \cO_Z(t))
}
$$

\noindent so  the codimension of the kernel of the restriction map $H^0(\PP^n, \cO_{\PP^n}(t)) \to
H^0(Z, \cO_Z(t))$ is $\geq \dim H^0(\PP^k, \cO_{\PP^k}(t)) = {{t+k}\choose{k}}$. 

(ii) Let $C\subset Z$ be an irreducible curve whose span is equal to
the span of $Z$, which can always be achieved by taking an irreducible
curve 
passing through finitely many linearly independent points on $Z$. Since
$I_Z\subset I_C$, it is clear that we need to prove the lemma only for
$C$ and thus we may assume $k=1$. 

We can assume that the codimension 2 subvariety of $\PP^n$ defined by $\{x_0=x_1=0\}$ 
does not 
intersect $Z$. Then the surjective map 
$$
\xymatrix{
\cO_Z ^{\oplus t}   \ar[rrr]^{(x_0^{t-1}, x_0^{t-2}x_1,\dots, x_1^{t-1})} &&& \;\;\;\; \cO_Z(t-1)
}
$$ 
gives a short exact sequence 
$$
0 \to \cO_{Z}^{\oplus t-1} \to \cO_Z(1)^{\oplus t} \to \cO_Z(t) \to 0.
$$
Since $Z$ spans a linear subvariety of dimension $s$, 
the image of the restriction map $H^0(\PP^n, \cO_{\PP^n}(1)^{\oplus t}) \to H^0(Z, \cO_Z(1)^{\oplus t})$ 
has dimension at least $(s+1)t$, so the image of the map 
$H^0(\PP^n, \cO_{\PP^n}(1)^{\oplus t}) \to H^0(Z, \cO_Z(t))$ has dimension at least $(s+1)t-(t-1) 
=st+1$. Therefore the image of the restriction map $H^0(\PP^n, \cO_{\PP^n}(t)) \to H^0(Z, \cO_Z(t))$ 
is at least $(st+1)$-dimensional.  
\end{proof}

\begin{proof}[Proof of Proposition \ref{zero}] 
Since $\dim Y = r$, by part (i) of the above lemma, the codimension of the space of sections in 
$H^0(\Gamma, \cO_{\Gamma}(i))$ which vanish on $Y$ is at least ${{r+2}\choose{2}}$ 
for $2 \leq i \leq d$. Since the codimension of $W_i$ in $H^0(\Gamma, \cO_{\Gamma}(i))$ is 
smaller than ${{r+2}\choose{2}}$, for every $2 \leq i \leq d$, 
there is $g_i \in W_i $ which does not vanish on $Y$, so for a general $C$ parametrized by $R$, 
$g_i$ does not vanish at $q_C$. 
Hence $x_0^{d-i}g_{i} \in W$, and its restriction to $C$ has a zero of
order $i$ at $p$ by  lemma \ref{order-of-zero}. 

To complete the proof of Proposition \ref{zero}, we show that for every $1 \leq j \leq d$, and for a general conic $C$ parametrized by $R$, 
there is $f_j \in W_d $ such that $f_j|_{l_C}$ 
has a zero of order $j$ at $q_C$, so $f_j|_C$ has a zero of order $d+j$ at $p$ by Lemma \ref{order-of-zero}. 
This is proved in the next proposition. 

\end{proof}

\begin{proposition}\label{conic}
Let $p$ be a point of $\PP^n$ and $\Gamma$ a hyperplane in $\PP^n$ which does not pass 
through $p$. Suppose that $R \subset \Hilb_{2t+1}(\PP^n)$ is an irreducible closed subscheme of dimension $r$ such that every curve parametrized by $R$ is smooth 
and passes through $p$. Let $W_d $ be a subspace of  $H^0(\Gamma, \cO_{\Gamma}(d))$, $d \geq 2$,  of codimension $c'$. If
$$ 
c' < \min \left ( {{r+1} \choose {2}}, (d-1) \left\lfloor
\frac{r+1}{2} \right\rfloor+1 \right ), 
$$ then for a general $[C]$ parametrized by $R$, and for every $0 \leq k \leq d$, 
there is $f_k \in W_d$ 
such that $f_k|_{l_C}$ has a zero of order equal to $k$ at $q_C$.
 \end{proposition}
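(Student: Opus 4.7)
I would prove the stronger statement that the restriction map
\[
\rho_C\colon W_d \longrightarrow H^0(l_C,\cO_{l_C}(d))
\]
is surjective for a general $[C]\in R$. Granting this, the $(d{+}1)$-dimensional target contains sections of every order of vanishing $0,1,\ldots,d$ at $q_C$, and these lift to the required $f_k\in W_d$. Conversely, if sections of all $d+1$ distinct orders appear in the image, that image must be $(d+1)$-dimensional, so $\rho_C$ is onto. For $[C]\in R$, set $V_j^C=\{f\in H^0(\Gamma,\cO_\Gamma(d)):\mathrm{ord}_{q_C}(f|_{l_C})\geq j\}$; surjectivity of $\rho_C$ is equivalent to $W_d\cap V_k^C\not\subseteq V_{k+1}^C$ for every $k=0,\ldots,d$, and I proceed by induction on $k$.

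\textbf{Base case $k=0$.} If $W_d\subseteq V_1^C$ for every $[C]\in R$, then $W_d\subseteq H^0(\Gamma,I_Y(d))$ where $Y=\overline{\{q_C:[C]\in R\}}$ has dimension $r$. Lemma \ref{codimension}(i) then gives $c'\geq\binom{d+r}{r}\geq\binom{r+2}{2}>\binom{r+1}{2}$, contradicting the hypothesis.

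\textbf{Inductive step $k\geq 1$.} Suppose for contradiction that $W_d\cap V_k^C\subseteq V_{k+1}^C$ for every $[C]$ in a dense open $U\subseteq R$. The inductive hypothesis supplies sections $f_0,\ldots,f_{k-1}\in W_d$ of orders exactly $0,1,\ldots,k-1$ at $q_C$ on $l_C$, which I use to normalise away the leading Taylor coefficients from any element of $W_d\cap V_k^C$. The bad assumption forces the restriction to $l_C$ of the resulting section to vanish to order $\geq k+1$ at $q_C$, so on $l_C$ this section is divisible by one extra linear factor vanishing at $q_C$. Globalising this factorisation over $U$, I aim to produce a subspace of codimension $\leq c'$ in $H^0(\Gamma,\cO_\Gamma(d-1))$ which vanishes on a family of subschemes parametrised by $U$. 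I then look for a one-dimensional subvariety $R'\subset U$ whose associated locus $Z\subset\Gamma$ is a curve spanning a linear subvariety of dimension $s=\lfloor(r+1)/2\rfloor$, and apply Lemma \ref{codimension}(ii) with $t=d-1$ to conclude $c'\geq (d-1)s+1$, contradicting the hypothesis.

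\textbf{Main obstacle.} The delicate step is producing the curve $Z\subset\Gamma$ whose span has dimension exactly $\lfloor(r+1)/2\rfloor$. The floor reflects a symmetric pairing between the points $q_C$ and the residual intersection geometry of the lines $l_C$: a one-parameter slice of $R$ yields both a curve of tangent-intersection points $q_C$ and a curve of lines $l_C$ through these points, whose combined incidence constrains the span to roughly half of $r+1$. Pinning down this dimension count, and eliminating degenerate configurations (e.g.\ all $l_C$ through a common point, or all tangent to a fixed line, ruled out by Corollary \ref{non-degenerate} and Lemma \ref{osculating}), constitutes the main technical work.
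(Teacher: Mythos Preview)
Your proposal is an outline with a genuine gap rather than a proof. The inductive step does not work as written. After ``normalising away the leading Taylor coefficients'' you obtain, for each fixed $[C]$, an element $g'\in W_d$ whose restriction to $l_C$ vanishes to order $\geq k+1$ at $q_C$; but $g'$ depends on $C$ (the coefficients in the subtraction do), so there is no single element of $W_d$ to which you can apply your ``globalised factorisation''. Worse, the ``one extra linear factor vanishing at $q_C$'' is a linear form on $l_C$, not a form on $\Gamma$; any lift to $\Gamma$ is far from unique and again varies with $C$. Consequently there is no well-defined subspace of $H^0(\Gamma,\cO_\Gamma(d-1))$ of codimension $\leq c'$ to which you can apply Lemma~\ref{codimension}(ii). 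You acknowledge this as the ``main obstacle'' but do not resolve it; the floor $\lfloor (r+1)/2\rfloor$ does not arise from any ``symmetric pairing'' in the way you suggest.

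The paper does not induct on $k$ at all; it gives a direct construction, split into two ranges that correspond exactly to the two terms in the $\min$. For $0\leq k\leq d-2$ one fixes a general hyperplane $H=\{x_1=0\}\subset\Gamma$, sets $Y'=Y\cap H$ (so $\dim Y'=r-1$), and looks inside the subspace of $W_d$ of the explicit shape $x_1^kf$ with $f\in H^0(H,\cO_H(d-k))$; Lemma~\ref{codimension}(i) applied to $Y'\subset H$ gives the bound $\binom{r+1}{2}$, and Lemma~\ref{linear} ensures $l_C\not\subset H$ so that the order of vanishing is exactly $k$. For $k\in\{d-1,d\}$ one uses Lemma~\ref{independent} to pick $s=\lfloor (r+1)/2\rfloor$ general conics with linearly independent $l_{C_i}$, then chooses complementary linear subvarieties $H'$ (of dimension $s-1$, through auxiliary points $q'_{C_i}\in l_{C_i}$) and $H$ (of codimension $s$, through the $q_{C_i}$); one restricts to forms supported on $H'$ and tracks the residual intersection points $q'_C=\Sigma_C\cap H'$, which sweep out a nondegenerate $Z\subset H'\cong\PP^{s-1}$. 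Lemma~\ref{codimension}(ii) applied to $Z$ in $H'$ with $t=d$ (resp.\ $t=d-1$) gives the required form of order $d$ (resp.\ $d-1$). You correctly sensed that Lemma~\ref{osculating}, Corollary~\ref{non-degenerate}, and a linear-independence statement for the $l_C$ are the key inputs, but the architecture is a direct two-case construction, not a degree-reducing induction.
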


\begin{proof}
Denote by $Y \subset \Gamma$  the subvariety swept out by the points
$q_C$ for $[C] \in R$ as before. 
By Lemma \ref{osculating}, $\dim Y = r$. 

Assume first that $0 \leq k \leq d-2$. Let $H$ be a general hyperplane in $\Gamma$, and let $Y' = Y \cap H$. Let $R^{\prime}$ 
be the locus in $R$ parametrizing conics $C$ for which $q_C \in Y'$. Then Lemma 
\ref{osculating} shows that $\dim R' = \dim Y' =r-1$. Choose a system
of homogeneous coordinates for $\PP^n$ as before so that $p =
(1:0:\dots:0)$, $\Gamma$ is given by $x_0 = 0$, and $H$ is given by  
$x_0=x_1=0$. Consider the vector space $U$ of all polynomials of the form $x_1^{k}f$ where $f$ is a homogeneous polynomial of degree $d-k \geq 2$ in $x_2, \dots, x_n$, and let $U_0$ be the subspace of $U$ 
consisting of those  
$x_1^kf$ where $f$ vanishes on $Y'$. Then the codimension of $U_0$ in $U$ is $\geq {{(r-1)+2} \choose {2}}$ by Lemma \ref{codimension} (i). 
Therefore if ${{r+1} \choose {2}}$ is greater than the codimension of $W_d$ in $H^0(\Gamma, \cO_{\Gamma}(d))$, then there is an element of the form $x_1^kf$ in $W_d$ 
such that $f$ does not vanish on $Y'$. So for a general $C$ parametrized by $R'$, $f$ does not 
vanish on $q_C$. Also, $x_1$ does not vanish on $l_C$ since by the next lemma  
$l_C$ does not lie on $H$. Therefore, the restriction of $x_1^kf$ to $l_C$ has a zero of order $k$ at $q_C$ for a general $C$ parametrized by 
$R'$. 

\begin{lemma}\label{linear}
Suppose that 
$R$ is an $r$-dimensional family of  smooth conics through $p$. Then for a general codimension $m$ 
linear subvariety $\Lambda$ of $\Gamma$, the locus in $R$ parametrizing conics $C$ for which $l_C$ lies on $\Lambda$ 
has dimension $\leq r-2m$. 
\end{lemma}

\begin{proof}
Denote by $G := \Gr(n-1-m,n-1)$ the Grassmannian of linear subvarieties of codimension $m$ in $\Gamma$. 
Consider the incidence correspondence 

$$
\xymatrix{
&  \;\;\;\;\; I = \{ ([C], \Lambda), l_C \subset \Lambda \} \subset R \times G \ar[ld]^{\pi_1}  \ar[rd]^{\pi_2}  &
\\
R  & & G}
$$

\noindent where 
$\pi_1$ and $\pi_2$ are the projection maps. The fibers of $\pi_1$ are of dimension $m(n-m-2)$ and $\dim G = m(n-m)$. So for a general $[\Lambda] \in G$, 
$\dim \pi_2^{-1}([\Lambda]) \leq \dim I - \dim G = \dim R+m(n-m-2)-m(n-m) = r-2m$.

\end{proof}

\smallskip

Next we prove the statement for $k = d-1$ or $d$. We need the following lemma.

\begin{lemma}\label{independent} 
Suppose that $R \subset \Hilb_{2t+1}(\PP^n)$ is a closed subscheme such that every 
curve parametrized by $R$ is a smooth conic through $p$. Let $\dim R = r$, and let $s = \lfloor \frac{r+1}{2} \rfloor$. 
If $C_1, \dots, C_s$ are general conics parametrized by $R$, then $l_{C_1}, \dots, l_{C_s}$ 
are linearly independent,  i.e. they span a  linear subvariety of dimension $2s-1$.
\end{lemma}
\begin{proof}
Assume that $s'$ is the largest number for which there are conics $C_1, \dots, C_{s'}$ 
parametrized by $R$ such that $l_{C_1},\dots, l_{C_{s'}}$ are linearly independent. Let $\Sigma$ be the linear 
span of $l_{C_1}, \dots, l_{C_{s'}}$. Then for any curve 
$C$ parametrized by $R$, $l_C$ should intersect $\Sigma$.
By Corollary \ref{non-degenerate}, for every point $q$ in $\Sigma$, there is at most a 
1-dimensional subscheme of $R$ parametrizing conics $C$ such that $l_C$ passes through $q$. 
Therefore, $\dim R \leq \dim \Sigma+1 = 2s'$, and so $s=\lfloor \frac{r+1}{2} \rfloor \leq s'$.  Hence 
there are $s$ conics parametrized by $R$ whose corresponding $l_C$'s are linearly independent, and 
so the same is true for $s$ general conics parametrized by $R$.
\end{proof}

\medskip

If $r=1$, then $c'=0$ by our assumption and there is nothing to prove. So assume $r \geq 2$, and put $s = \lfloor \frac{r+1}{2} \rfloor$. Let $[C_1], \dots, [C_s]$ be general points of 
$R$. By the above lemma, $l_{C_1}, \dots, l_{C_s}$ are linearly independent.
Choose points $q'_{C_i} \neq q_{C_i}$  in $l_{C_i}$.
Denote by $H'$ the $(s-1)$-dimensional linear subvariety spanned by 
the points $q'_{C_i}$, $1 \leq i \leq s$, and let $H$ be a general linear subvariety of $\Gamma$ of codimension $s$ containing 
$q_{C_1}, \dots, q_{C_s}$ (note that $n-1-s \geq s-1$ by Corollary \ref{non-degenerate}, so such $H$ exists). Then  $H$ and $H'$ are disjoint. 
Since $C_1, \dots, C_s$ and $H$ are general and $r-s \geq 1$, by Bertini theorem the locus $R'$ in $R$ parametrizing curves $C$ such that $q_C \in H$ is irreducible. 


For $[C]\in R'$, 
let $\Sigma_C$ be the linear subvariety spanned by 
$H$ and $l_{C}$. Since for each $1 \leq i \leq s$, $l_{C_i}$ does not lie on $H$, for a general $[C] \in R'$, $l_C$ does not lie on $H$, and so 
$\Sigma_C$ is of codimension $s-1$ in $\Gamma$ and intersects $H'$ at a point $q'_C$. Since $R'$ is irreducible, the points $q'_C$ span an irreducible 
quasi-projective subvariety $Z$ of $H'$. Since $Z$ is irreducible and contains $q'_{C_1}, \dots, q'_{C_s}$, it is non-degenerate in $H'$ and has dimension at least 1. 

Set $U := H^0(H', \cO_{H'}(d))$ which can be considered  as a subspace of
$H^0(\Gamma, \cO_{\Gamma}(d))$. 
 By Lemma \ref{codimension} (ii),  
forms of degree $d$ on $H'$ which vanish on $Z$ form a subspace of codimension at least 
$d(s-1)+1$ in $U$. Therefore if $d(s-1)+1>c'$, there is a form $f \in W_d \cap U$ which does not vanish at the generic point of 
$Z$. If $[C]$ is such that $f$ does not vanish at $q'_C$, then 
it does not vanish at any point of $\Sigma_C$ which is not on $H$, so $f$ cannot be identically zero on $l_{C}$.  
Hence $f \in W_d$ and $f|_{l_C}$ has a zero of order $d$ at $q_C$. 

Repeating the same argument with $d$ replaced by 
$d-1$ and choosing a form $h$ of degree $1$ on $\Gamma$ which does not vanish on $H$, 
we see that if $(d-1)(s-1)+1 > c'$, then there is a form $g$ of degree $d-1$ in $H^0(H', \cO_{H'}(d-1))$ such that 
$gh \in W_d$ and $gh|_{l_C}$ has a zero of order $d-1$ at $q_C$. This completes the proof. 
\end{proof}

\bigskip

\section{Proof of Theorem \ref{main}}

Let $X$ be a general hypersurface of degree $d$ in $\PP^n$. In this section we show that the evaluation map 
$$
ev: \overline{\M}_{0,1}(X, 2) \to X
$$
is flat of constant fiber dimension $2(n-d)$ if $d$ is in the range of Theorem \ref{main}. 
Recall that  $\overline{\M}_{0,1}(\PP^n,  2)$ is a smooth stack of dimension $3(n+1)-3=3n$ and 
that  $\overline{\M}_{0,1}(X,2)$ is the zero locus of a section of 
a locally free sheaf of rank $2d+1$ over $\overline{\M}_{0,1}(\PP^n,2)$. 
If the fibers of $ev$ are of expected dimension $2(n-d)$, then $\overline{\M}_{0,1}(X,2)$ has 
dimension 
$$
2(n-d)+n-1 = 3n-(2d+1),
$$ 
so it is a local complete intersection and in particular a Cohen-Macaulay substack of 
$\overline{\M}_{0,1}(\PP^n,2)$. Since a 
map from a Cohen-Macaulay scheme to a smooth scheme is flat if and only if it has constant fiber dimension (\cite[Theorem 23.1]{matsumura}), to prove the theorem, it is enough to show that 
$ev$ has constant fiber dimension $2(n-d)$. Note that $\dim \overline{\M}_{0,1}(X,2)$ 
is at least $3n-(2d+1)$, and $ev$ is surjective, so every irreducible component of any 
fiber of $ev$ has dimension at least $2(n-d)$.

Let $p$ be a point in $X$ and $\M$ an irreducible component of $ev^{-1}(p)$. 
Since $\overline{\M}_{0,1}(\PP^n,2)$ is smooth of dimension $3n$ (see Section 2), and since the fibers of the evaluation map 
$$\widetilde{ev}: \overline{\M}_{0,1}(\PP^n,2) \to \PP^n$$ 
are all isomorphic, $\widetilde{ev}^{-1}(p)$ is smooth of dimension $2n$. Since the space 
of lines through every point of $\PP^n$ has dimension $n-1$, the space of 
reducible conics through every point of $\PP^n$ has dimension $2n-1$.  
Thus the stable maps with reducible domains which are parametrized by $\widetilde{ev}^{-1}(p)$ form a divisor in $\widetilde{ev}^{-1}(p)$. 
Hence there are 3 possibilities for the locus $\M'$ in $\M$ parametrizing stable maps with reducible 
domains: 1) $\dim \M' = \dim \M-1$, 2) $\dim \M' =\dim \M$, 3) $\M'$  is empty. 
Since the space of lines through every point $p$ in $X$ has dimension $n-d-1$ (\cite[Theorem 2.1]{hrs}), the space of reducible conics through $p$ has dimension 
$2n-2d-1$, and hence in the first case, $\dim \M = 2(n-d)$ which is the expected dimension. The second case cannot happen 
since in this case $\dim \M$  would be equal to $2(n-d)-1$ which is smaller than expected. 

So assume that $\M$ parametrizes 
only stable maps with irreducible domains, and assume to the contrary 
that $\dim \M \geq 2(n-d)+1$. If $1 \leq d < \frac{n+1}{2}$, then by \cite[Theorem 1.2 and Corollary 5.5]{hrs}, the space of conics through every point of $X$ has dimension $2(n-d)$, hence 
$\frac{n+1}{2} \leq d$.

Any map parametrized by $\M$ is either an isomorphism onto a smooth conic through $p$ or a double cover of a line through $p$. 
For any line $l \subset X$ through $p$, there is a $2$-parameter family of degree 2 covers of $l$ (determined by the 2 branch points), and by 
\cite[Theorem 2.1]{hrs}, the family of lines through $p$ in $X$ has dimension $n-d-1$. So the substack of $\M$ parametrizing 
double covers of lines has dimension at most $n-d+1$, and therefore,   
there is an irreducible  closed subscheme $R$ of $\M$ of dimension $n-d-1$ which parametrizes only  
smooth embedded conics through $p$ in $X$.

Let $\HH$ be the projective space of hypersurfaces of degree $d$ in $\PP^n$, and let $B$ be the closed subscheme of 
$\HH \times \PP^n \times \Hilb(\PP^n)$ parametrizing all the points $([X], p, [C])$ such that 
$C$ is a smooth conic in $X$ through $p$ which belongs to a larger
than expected component $R$ of $ev^{-1}(p)$.
Then since by our assumption the projection map $B \to \HH$ is dominant and $X$ is general,  
by Proposition \ref{summary}, there is a subspace $W_{X,p}  \subset
H^0(X, \cO_X(d)(-p))$ of codimension at most $n-1$ such that for a general curve $[C]$ parametrized by $R$ and for every $w \in W_{X,p}$, $w|_C$ can be lifted to a section 
of $N_{C/\PP^n}(-p)$  under the map 
$$\rho: H^0(C,N_{C/\PP^n}(-p)) \to H^0(C,\cO_C(d)(-p))$$ obtained 
from the short exact sequence 
\begin{equation}\label{seq2}
0 \to N_{C/X}(-p) \to N_{C/\PP^n}(-p) \to \cO_C(d)(-p) \to 0.
\end{equation}
We show that this implies $\rho$ is surjective. Since $\frac{n+1}{2} \leq d$,  we have 
$$
{{n-d} \choose {2}} \leq (d-1)\left\lfloor \frac{n-d}{2} \right\rfloor +1.
$$ Hence by Proposition \ref{zero}, if 
$n \leq {{n-d} \choose {2}}$, then for every $2 \leq i \leq 2d$, there exists  
$s_i \in H^0(C, \cO_C(d))$ which has a zero of order $i$ at $p$ and can be lifted to a section of 
$N_{C/\PP^n} (-p)$. So to show $\rho$ is surjective, it is enough to 
show that there is also a section of $H^0(C, \cO_C(d))$ which has a zero of order 1 at $p$ and can be lifted to a section of $N_{C/\PP^n}(-p)$.

Suppose that $X$ is given by $f=0$ in $\PP^n$, and consider the commutative diagram
$$
\xymatrix{ \cO_C(1)^{n+1}\; \ar@{>>}[rr]^(0.55){(\frac{\partial f}{\partial x_0},\dots, \frac{\partial f}{\partial x_n})} \ar@{>>}[d] && \;\;\; \cO_C(d) \\
T_{\PP^n}|_C \ar@{>>}[rr] && N_{C/\PP^n} \ar@{>>}[u]. 
}
$$
Since $X$ is smooth at $p$, there is an $i$, $0 \leq i \leq n$, such that 
$\frac{\partial f}{\partial x_i}$ does not vanish at $p$. Hence there is a section of $\cO_C(d)$ which has a simple zero at $p$ and is in the image of 
the map $H^0(C, \cO_C(1)^{n+1}) \to H^0(C, \cO_C(d))$. 
Such a section can be lifted to a section of $N_{C/\PP^n} (-p)$. Hence $\rho$  
is surjective.

Applying the long exact sequence of cohomology to sequence
(\ref{seq2}), we get 
$
H^1(C, N_{C/X}(-p) ) = 0,
$ 
thus  
$$
h^0(C, N_{C/X}(-p))= \chi(N_{C/X}(-p)) = \chi(T_X|_C(-p)) - \chi (T_C(-p)) = 2(n-d).
$$
On the other hand, the Zariski tangent space to $ev^{-1}(p)$ at $[C]$ is isomorphic to 
$H^0(C, N_{C/X}(-p))$, thus 
$\dim \M$ should be at most $2(n-d)$, which is a contradiction.

\section{Non-free lines in complete intersections}

By a {\em{non-free line}} in a subvariety $X$ of $\PP^n$ we mean a line $l$ which is contained in the smooth locus of $X$ and its normal bundle in $X$ 
is not globally generated. For a line $l$ contained in the smooth
locus of $X$, this is equivalent to saying that for any point (or some
point) $p\in
l$, the natural map $H^0(l,N_{l/X})\to N_{l/X}|p$ is not surjective.

\begin{lemma}\label{nonfreelines}
  If $X$ is a  complete intersection of multidegree $(d_1, \dots,
  d_m)$ in $\PP^n$ such that  $d_1+\dots+d_m \leq n-1$ then $X$ is
  covered by lines and  the set of non-free lines contained
  in the smooth locus of $X$ do not cover a dense subset of $X$.
\end{lemma}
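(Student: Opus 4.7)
The plan is to handle the two assertions by a standard incidence and tangent space argument. First I would show $X$ is covered by lines: for any $p\in X$ and defining equations $f_1,\dots,f_m$ of $X$ of degrees $d_1,\dots,d_m$, the lines through $p$ are parametrized by $[q]\in \PP^{n-1}$ via $l_q=\{p+tq\}$. Expanding $f_i(p+tq)$ in $t$ yields a polynomial of degree $d_i$ whose constant term is $f_i(p)=0$, so the condition $l_q \subset X$ becomes the vanishing of the coefficients of $t,t^2,\dots,t^{d_i}$ for each $i$, giving in total $d=d_1+\cdots+d_m$ homogeneous equations in $q$. Hence the scheme of lines in $X$ through $p$ has dimension at least $n-1-d\geq 0$, so is non-empty, and $X$ is covered by lines.

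For the second assertion, let $F$ denote the open subscheme of the Hilbert scheme parametrizing lines of $X$ contained in the smooth locus $X_{\rm sm}$, and let $\mathcal I\subset F\times X_{\rm sm}$ be the universal line (a $\PP^1$-bundle over $F$), with projections $\pi_1:\mathcal I\to F$ and $\pi_2:\mathcal I\to X_{\rm sm}$. If $\pi_2$ is not dominant, then every line in $X_{\rm sm}$ is contained in $\overline{\pi_2(\mathcal I)}$, which is a proper closed subset of $X$, so \emph{a fortiori} the non-free ones cannot cover a dense subset of $X$ and we are done. Otherwise, by generic smoothness in characteristic zero, pick a smooth point $(l_0,p_0)$ of $\mathcal I$ at which $d\pi_2$ is surjective.

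Since $l_0\cong \PP^1$ lies in $X_{\rm sm}$, the normal sheaf $N_{l_0/X}$ is a vector bundle on $l_0$ and the Zariski tangent space to $F$ at $[l_0]$ is $H^0(l_0,N_{l_0/X})$. The $\PP^1$-bundle structure of $\mathcal I \to F$ gives
$$
0 \to T_{p_0}l_0 \to T_{(l_0,p_0)}\mathcal I \to H^0(l_0,N_{l_0/X}) \to 0,
$$
and $d\pi_2$ sends $(v,s)$ to $v+s(p_0)\in T_{p_0}X$, where $s(p_0)\in N_{l_0/X}|_{p_0}=T_{p_0}X/T_{p_0}l_0$. Surjectivity of $d\pi_2$ therefore forces the evaluation map $H^0(l_0,N_{l_0/X})\to N_{l_0/X}|_{p_0}$ to be surjective. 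Since $N_{l_0/X}$ splits as a direct sum of line bundles on $\PP^1$, surjectivity at one point is equivalent to all summands having non-negative degree, i.e., to $l_0$ being free. Thus the generic $(l,p)\in\mathcal I$ has $l$ free, so the non-free lines in $X_{\rm sm}$ form a proper closed subfamily whose image in $X$ cannot be dense.

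The main obstacle, such as it is, lies in the bookkeeping around the cases where $X$ might be singular or reducible: one must note that the non-dominant case is handled trivially as above, and that in the dominant case a generic smooth point of $\mathcal I$ exists automatically in characteristic zero. The key elementary input is that a vector bundle on $\PP^1$ is globally generated if and only if its evaluation at any single point is surjective, which converts the tangent-space computation into the freeness statement.
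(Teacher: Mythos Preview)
Your argument for the first assertion (that $X$ is covered by lines) is the same as the paper's, and for the second assertion the key computation is also the same: the differential $d\pi_2$ at $([l],p)$ is surjective if and only if the evaluation map $H^0(l,N_{l/X})\to N_{l/X}|_p$ is surjective, which in turn is equivalent to $l$ being free.

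There is, however, a genuine gap in your final sentence. From ``the non-free lines form a proper closed subfamily of $F$'' you cannot conclude that their union in $X$ is not dense: a proper closed subfamily of curves may still sweep out all of $X$ when $\dim F+1>\dim X$, which is exactly what happens here whenever $d<n-1$ (the fibers of $\pi_2$ are positive-dimensional). The correct way to finish---and this is precisely what the paper does---is to run your tangent-space computation on the incidence $\mathcal I_{\mathcal J}=\pi_1^{-1}(\mathcal J)$ over the non-free locus $\mathcal J\subset F$ itself: since every $l$ parametrized by $\mathcal J$ is non-free, your equivalence shows that $d\pi_2$ is nowhere surjective on $\mathcal I_{\mathcal J}$, and therefore (by generic smoothness in characteristic zero, now applied to $\mathcal I_{\mathcal J}\to X$) this map cannot be dominant. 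You already have every ingredient; only the last inference needs to be rerouted through the non-free incidence rather than through the full one.
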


\begin{proof}
  The first part is proved in \cite[Proposition 2.13]{debarre}. We
  just repeat the proof here. Let $p\in X$ and without loss of
  generality we may assume that $p=(1:0:\cdots :0)$ and $X$ is defined
  by $0=f_i=a_{i1}x_0^{d_i-1}+\cdots+ a_{id_i}$ for $1\leq i\leq m$,
  $a_{ij}$  homogeneous polynomials in $x_1,\ldots,x_n$. Since
  there are $\sum d_i$ of these $a_{ij}$s, they have a common
  non-trivial zero in $\PP^{n-1}=\{x_0=0\}$ since $n-1\geq \sum d_i$
  and then the line joining 
  $p$ and this point in $\PP^{n-1}$ is contained in $X$. Since $p$ was
  arbitrary, we see that $X$ is covered by 
  lines. 

 Let $\mathcal J$ be   the locally closed subscheme
 (with the reduced induced structure) of $R_1(X)$ (the set of lines in
 $X$) parametrizing
 non-free lines contained in the smooth locus of $X$.
 Let $\mathcal I \subset \mathcal J \times X$ denote the incidence
 correspondence. We wish to show that the projection $\cI\to X$ is not
 dominant. 
 Let  $([l], p)$ be a general point of  
$\mathcal I$. Then we have  $T_{R_1(X),[l]} \cong H^0(l,N_{l/X})$, and since $l$ is
contained in the smooth locus of $X$,  $T_{X,p}\to N_{l/X}|_p$ is
surjective. 

We have  a commutative diagram 
$$\xymatrix{
T_{\mathcal I, ([l],p)} \ar[rr] \ar[d] & & T_{X,p}
\ar@{->>}[d]  \\ 
T_{\mathcal J, [l]} \ar[r] & T_{R_1(X),[l]}=H^0(l,N_{l/X}) \ar[r] & N_{l/X}|_p 
}
$$
Since $l$ is not free, the map $H^0(l,N_{l/X}) \to N_{l/X}|_p$ is not
surjective, so $T_{\mathcal I, ([l],p)} \to  
T_{X,p}$ is not surjective and thus  $\cI\to X$ is not dominant. 
\end{proof}

\smallskip
\begin{lemma}\label{expected}
If $X \subset \PP^n $ is a smooth complete intersection of multidegree $(d_1, \dots, d_m)$ such that 
$d_1+\dots+d_m \leq n-1$, then $R_e(X)$ (and hence $\overline{\M}_{0,0,}(X,e)$) has at least one irreducible component of the 
expected dimension for every $e \geq 1$. 
\end{lemma}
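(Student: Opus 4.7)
The plan is to construct, for each $e\geq 1$, an embedded smooth rational curve $C\subset X$ of degree $e$ with $H^1(C,N_{C/X})=0$. Any such $C$ furnishes a smooth point of both $R_e(X)$ and $\overline{\M}_{0,0}(X,e)$ at which the local dimension equals $\chi(N_{C/X})=e(n+1-d)+n-m-3$, which is the expected dimension. The approach is the classical one of building a chain of $e$ free lines and smoothing it.

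First, Lemma \ref{nonfreelines} provides a proper closed subset $Z\subsetneq X$ outside of which every line in $X$ is free, and guarantees that $X$ is covered by lines. I would fix a general point of $X\setminus Z$, choose a free line $\ell_1\subset X$ through it, and then inductively pick a general point $x_i\in \ell_i\setminus Z$ and a free line $\ell_{i+1}\neq \ell_i$ through $x_i$ (for $m<n-1$ this is straightforward since the lines through a general point of $X$ form a positive-dimensional family; the boundary case is handled by choosing $x_i$ so that some line other than $\ell_i$ passes through it, which is possible because $X$ is covered by lines). This produces a connected chain $C_0=\ell_1\cup\cdots\cup\ell_e$ of smooth rational curves meeting transversely at $e-1$ nodes, together with a stable map $f_0\colon C_0\to X$ of degree $e$ whose restriction to each component is a closed embedding of a free line.

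Next I would show $H^1(C_0,f_0^*T_X)=0$. Since each $\ell_i$ is free, $f_0^*T_X|_{\ell_i}=T_{\ell_i}\oplus N_{\ell_i/X}$ is globally generated on $\ell_i\cong \PP^1$, so it has vanishing $H^1$ and surjects onto its fiber at every point. From the normalization sequence
\[
0\to f_0^*T_X \to \bigoplus_{i=1}^e f_0^*T_X|_{\ell_i} \to \bigoplus_{q\ \mathrm{node}} (f_0^*T_X)_q \to 0,
\]
the surjectivity of evaluation at the nodes together with the vanishing on each component would force $H^1(C_0,f_0^*T_X)=0$; equivalently, $f_0^*T_X$ is itself globally generated on $C_0$. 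In particular $[f_0]$ is a smooth point of $\overline{\M}_{0,0}(X,e)$ of the expected dimension.

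Finally I would invoke the standard smoothing theorem for genus-zero stable maps whose pull-back tangent bundle is globally generated (see \cite[II.7]{kollar}) to deform $f_0$ to a stable map $f\colon \PP^1\to X$ of degree $e$ with smooth irreducible domain. By genericity of the nodes in the chain construction, a general deformation is a closed immersion whose image $C$ is an embedded smooth rational curve of degree $e$ in $X$. Upper-semicontinuity of $h^1$ gives $H^1(\PP^1,f^*T_X)=0$, and the sequence $0\to T_{\PP^1}\to f^*T_X\to N_{C/X}\to 0$ then yields $H^1(C,N_{C/X})=0$, so $[C]$ lies on an irreducible component of $R_e(X)$ (and of $\overline{\M}_{0,0}(X,e)$) of the expected dimension. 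The most delicate point is ensuring that the general smoothing produces an actual closed embedding and not merely an immersion; this is standard and follows by choosing the attaching points generically.
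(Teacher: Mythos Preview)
Your proof is correct and follows essentially the same strategy as the paper's: build a chain of $e$ free lines using Lemma \ref{nonfreelines}, smooth it via \cite[II.7.6]{kollar} to an embedded smooth free rational curve of degree $e$, and conclude that the component through it has the expected dimension. The only differences are cosmetic---the paper deduces $H^1(C,N_{C/X})=0$ by noting that the component sweeps out $X$ and then reapplying the tangent-space argument of Lemma \ref{nonfreelines} to see a general member is free, whereas you use semicontinuity directly---and note that in your chain construction the condition for a positive-dimensional family of lines through a general point should read $d<n-1$, not $m<n-1$.
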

\begin{proof}
We first show that smooth rational curves of degree $e$ in $X$ sweep
out a dense subset of $X$. From the previous lemma, $X$ is covered by
lines and any line passing through a general point is free.
Hence 
there is a chain of $e$ free lines in $X$ for every $e \geq 1$. By
\cite[Theorem 7.6]{kollar}, this chain of  
lines can be deformed to a smooth free rational curve $C$ of degree $e$ 
in $X$.  Since $C$ is free, its flat deformations in $X$ sweep out
$X$. 

If $R$ is a component of $R_e(X)$ such that the curves parametrized by its points sweep out a dense 
subset of $X$, then the  argument in the previous lemma
\ref{nonfreelines} shows that the normal 
bundle of a general curve $C$ parametrized by $R$ should be globally generated. 
We have
\begin{equation*}
\begin{split}
\dim T_{R_{e}(X),[C]} & = h^0(C, N_{C/X}) \\
& = \chi(N_{C/X}) + h^1(C,N_{C/X})\\
& = e(n+1-d)+(n-m-3).
\end{split}
\end{equation*}
On the other hand the dimension of $R$ is at least the expected dimension $e(n+1-d)+n-m-3$, so it should be of the expected dimension and 
smooth at $[C]$. 
\end{proof}
\medskip

\begin{proposition}\label{sweep}
If $X$ is a general hypersurface of degree $3 \leq d \leq n-1$ in $\PP^n$, then the non-free lines in $X$ sweep out a divisor in $X$.  
\end{proposition}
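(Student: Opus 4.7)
The plan is to identify non-free lines through a point $p\in X$ with singular points of the corresponding Fano scheme of lines through $p$, and then to perform a dimension count on an incidence correspondence in $\HH\times\PP^n$. For each $p\in X$, choose coordinates with $p=(1:0:\cdots:0)$ and expand $f=\sum_{i=1}^d x_0^{d-i}q_i(x_1,\ldots,x_n)$, with $q_i$ a form of degree $i$. A direct computation shows that the scheme $F_p(X)$ of lines in $X$ through $p$ is the complete intersection
$$Y_p := V(q_1,q_2,\ldots,q_d) \subset \PP^{n-1}$$
of type $(1,2,\ldots,d)$, with expected dimension $n-d-1$. Using $T_{[l]}F_p(X)\cong H^0(l,N_{l/X}(-p))$ (cf.\ \cite[II, Theorem 1.7]{kollar}), a line $l$ through $p$ in direction $[v]$ is non-free (i.e.\ $N_{l/X}$ has a summand of negative degree) if and only if $h^0(N_{l/X}(-p))$ exceeds the expected value $n-d-1$, which is equivalent to the Jacobian of $(q_1,\ldots,q_d)$ dropping rank at $[v]$, i.e.\ $Y_p$ being singular at $[v]$.

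Consider now the incidence $\Phi=\{(X,p)\in\HH\times\PP^n : p\in X \text{ and some non-free line in } X \text{ passes through } p\}$, whose fiber over $X\in\HH$ is the sweep $D(X)\subset X$ of non-free lines. For a fixed $p$, the affine cone of $\{X\in\HH:p\in X\}$ is canonically identified with $\mathcal{P}:=\bigoplus_{i=1}^d H^0(\PP^{n-1},\cO(i))$ via $f\leftrightarrow(q_1,\ldots,q_d)$, and the condition $p\in D(X)$ corresponds to $(q_1,\ldots,q_d)$ lying in the discriminant hypersurface $\Delta\subset\mathcal{P}$ of complete intersections of type $(1,2,\ldots,d)$ in $\PP^{n-1}$. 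Since $n-1\geq d$, a generic tuple in $\mathcal{P}$ gives a smooth $Y_p$, so $\Delta$ is a proper hypersurface of codimension $1$. The hypothesis $d\geq 3$ is needed to ensure that $\Delta$ meets the open locus in $\mathcal{P}$ parametrizing smooth $X$: for $d\geq 3$ one can construct (even explicitly) a smooth $X$ with a nodal $Y_p$, whereas for $d=2$ a direct Hessian computation shows that singularity of $Y_p$ forces singularity of $X$.

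Granting this, the fiber of $\Phi$ over $p$ (restricted to the smooth-$X$ locus) has dimension $\dim\HH-2$, so $\dim\Phi=\dim\HH+n-2$. Projecting $\Phi$ to $\HH$, the generic fiber $D(X)$ has dimension $n-2$; combined with Lemma \ref{nonfreelines}, which gives $D(X)\subsetneq X$ and hence $\dim D(X)\leq n-2$, we conclude that $D(X)$ is a divisor in $X$. The main obstacle is verifying that $\Delta$ meets the smooth-$X$ locus for $d\geq 3$, i.e.\ exhibiting a smooth hypersurface of each such degree that contains a non-free line.
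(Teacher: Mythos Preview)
Your approach mirrors the paper's: set up the incidence in $\PP^n\times\HH$, identify the fiber over $p$ with the locus $\Delta$ of tuples $(q_1,\ldots,q_d)$ for which $F_p(X)=V(q_1,\ldots,q_d)\subset\PP^{n-1}$ is singular, and count dimensions. But two steps need more than you provide.

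First, the assertion that $\Delta$ has codimension $1$ is not justified by ``a generic tuple gives a smooth $Y_p$''; that only shows $\Delta$ is proper. The paper proves codimension $1$ by fixing general $q_2,\ldots,q_d$ so that $Y:=V(q_2,\ldots,q_d)\subset\PP^{n-1}$ is smooth of dimension $n-d\geq 1$, and then invoking Ein's theorem \cite[Proposition~3.1]{ein} that the dual variety of such a $Y$ is a hypersurface; hence a codimension-$1$ family of hyperplanes $\{q_1=0\}$ cuts $Y$ singularly. Second, the paper does not attempt your ``main obstacle'' (constructing a smooth $X$ with singular $Y_p$) at all. Instead it enlarges the incidence $I$ to include also pairs $(p,[X])$ where some line through $p$ meets the singular locus of $X$; then the fiber of $I\to\PP^n$ over $p$ is all of $\Delta$ with no smoothness restriction, giving $\dim I\geq\dim\HH+n-2$ directly. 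Dominance of $\pi_2:I\to\HH$ is argued indirectly: fibers lie in $X$ so have dimension $\leq n-1$, whence the image has codimension $\leq 1$; if it were exactly $1$, a general $[X]$ in the image would satisfy $\pi_2^{-1}([X])=X$, and Lemma~\ref{nonfreelines} would force the lines through singular points of $X$ to sweep out $X$, so $X$ is singular along a curve or is a cone---both conditions of codimension $>1$ in $\HH$ once $d\geq 3$. This last step is where the hypothesis $d\geq 3$ enters in the paper, replacing your unproved explicit construction.
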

\begin{remark}
  If $d=1$, clearly this set is empty. Same is true if $d=2$. We have
  already seen that the codimension of this set is at least one in
  lemma \ref{nonfreelines}.
\end{remark}

\begin{proof}
Let $\HH$ be the projective space of hypersurfaces of degree $d$ in $\PP^n$.
Consider the subvariety $I \subset \PP^n \times \HH$ consisting of pairs $(p,[X])$ 
such that there is either a non-free line in $X$ through $p$ or a line in $X$ through $p$ which intersects the singular locus of $X$. 
Denote by $\pi_1$ and $\pi_2$ the projection maps from $I$ to $\PP^n$ and $\HH$.  
We show the fibers of $\pi_1$ are of dimension $=\dim \HH + n-2$. 

Since all the fibers of $\pi_1$ 
are isomorphic, we can assume $p = (1:0, \dots :0)$. A hypersurface $X$ which contains $p$ is given by an equation of the form $x_0^{d-1}f_1+\dots +f_d = 0$ where $f_i$ is homogenous of degree $i$ in $x_1,\dots,x_n$ for $1 \leq i \leq d$. The space of lines through $p$ in $X$, which we denote by 
$F_p(X)$, is isomorphic to the scheme $V(f_1,\dots, f_d)$ in $\PP^{n-1}$, so $\dim F_p(X) \geq n-d-1$. 
If $F_p(X)$ is singular at $[l]$, then we see that $([X],p) \in \pi_1^{-1}(p)$ as follows. 
Since 
$$T_{F_p(X),[l]} \cong H^0(l,N_{l/X}(-p)),$$ 
if $F_p(X)$ is singular at $[l]$, then 
$$
h^0(l,N_{l/X}(-p)) > \dim F_p(X) \geq n-d-1. 
$$
So either $l$ is not contained in the smooth locus of $X$, or 
it is contained in the smooth locus of $X$ and 
\begin{equation*}
\begin{split}
h^1(l,N_{l/X}(-p))  & = h^0(l,N_{l/X}(-p)) - \chi(N_{l/X}(-p)) \\
& = h^0(l,N_{l/X}(-p)) - (n-d-1) \\
&> 0,
\end{split}
\end{equation*}
so $l$ is non-free.
  
If $2 \leq d \leq n-1$, and if $f_i$ is a general homogeneous polynomial of degree $i$ in $x_1,\ldots,x_n$ for $2\leq i\leq d$, 
then the intersection $Y : = V(f_2,\dots,f_{d})$ is a smooth complete intersection 
subvariety of $\PP^{n-1}$ of dimension $n-d \geq 1$. By \cite[Proposition 3.1]{ein}, 
the dual variety of $Y$ in ${\PP^{n-1}}^{\vee}$ is a hypersurface, so
there is a codimension 1 subvariety of  
$H^0(\PP^{n-1}, \cO_{\PP^{n-1}}(1))$ consisting of forms $f_1$ such that $Y \cap \{f_1=0\}$ is singular. 
This shows that the space of tuples $(f_1, \dots, f_d)$ for which the scheme 
$V(f_1,\dots, f_d)$ is singular is of codimension 1 in the space of all tuples $(f_1, \dots, f_d)$.
So the fibers of $\pi_1$ over $p$ form a subvariety of codimension at most 1 in the space of all 
hypersurfaces which contain $p$, and $\dim I \geq \dim \HH + n-2$.

Consider now the map $\pi_2: I \to \HH$. Since the fibers of $\pi_2$ have dimension 
at most $n-1$, either $\pi_2$ is dominant or its image is of codimension 1 in $\mathcal H$.  We show 
the latter cannot happen.  For any  
hypersurface $X$, the space of lines which are contained in the smooth
locus of $X$ and are not free cannot sweep out a dense subset in $X$
by Lemma \ref{nonfreelines}, so if $\dim \pi_2^{-1}([X]) = n-1$, then   
the lines passing through the singular points of $X$ should sweep out $X$.  The locus of hypersurfaces which are singular at least along a curve is of codimension greater than 1 in $\mathcal H$, and so is the locus of hypersurfaces 
which are cones over hypersurfaces in $\PP^{n-1}$ when $d \geq 3$. Therefore $\pi_2$ is dominant, and $\dim I = \dim \mathcal H +n-2$, 
so a general fiber of $\pi_2$ has dimension $n-2$. 
 
\end{proof}


\begin{proof}[Proof of Theorem \ref{lines}]
Assume to the contrary that every smooth hypersurface of degree $d$ 
has a family of lines of dimension $(a_k+1)$ passing through one point such that 
for every line $l$ in the family, $h^1(l, N_{l/X}(-1)) \geq k$.  Let $\HH$ be the projective space of hypersurfaces of degree $d$ in $\PP^n$ and $\mathcal U$ the universal hypersurface over $\HH$. 
Let $F_{p,k}(X)$ be the subvariety of the Grassmannian of lines in $X$ passing though $p$ parametrizing lines $l$ 
with $h^1(l, N_{l/X}(-1)) \geq k$, and let  
$B$ be the closed subvariety of $\mathcal 
U \times \Gr(1,n)$ consisting of all the points $([X], p, [l])$ such that 
$ \dim F_{p,k}(X)$ at $[l]$ is  larger than $a_k$.

Denote by $\phi: B \to \mathcal U$ and $\pi_1: \mathcal U 
\to \HH$ the projection maps. By our assumption $\psi=\pi_1\circ \phi$ 
is dominant. We replace $B$ by an irreducible component of $B$ for
which $\psi$ is still dominant.
 By Proposition \ref{summary}, for a general point 
$(X,[p])$ in the image of $\phi$, there is a nonempty open subset
 $B_0$ of $\phi^{-1}([X],p)$ and a subspace  
$$
W_{X,p}  \subset H^0(\PP^n, \cO_{\PP^n}(d)(-p))
$$ 
of codimension at most $n-1$ such that for every  
$ b= ([X], p, [l]) \in B_0$ and  $w \in W_{X,p}$,  $w|_l$ can be lifted to a section of $N_{l/\PP^n}(-p)$ under the map obtained from the short exact sequence,
$$
0 \to N_{l/X}(-p) \to N_{l/\PP^n}(-p) \to \cO_l(d)(-p) \to 0.
$$

Suppose that $([X],p,[l])$ is a point of $B_0$. Let $\Gamma$ be a hyperplane in $\PP^n$ which 
does not pass through $p$. Choose a system of coordinates for $\PP^n$ so that 
$p = (1:0:\dots:0)$ and $\Gamma$ is given by $x_0=0$. Let $F$ be a
larger than expected  dimension irreducible component of
$F_{p,k}(X)$ to which $[l]$ belongs.  
The cone of lines parametrized by $F$ intersects $\Gamma$ along a subvariety $Y$ of dimension 
$\geq a_k+1$.

For every $i \geq 1$, multiplication by $x_0^{d-i}$ identifies $H^0(\Gamma, \cO_{\Gamma}(i))$ 
with a subspace of $H^0(\PP^n, \cO_{\PP^n}(d)(-p))$. 
Set 
$$
W_i := W_{X,p} \cap \; x_0^{d-i}H^0(\Gamma, \cO_{\Gamma}(i)). 
$$ 
Since the codimension of $W_{X,p}$ in $H^0(\PP^n, \cO_{\PP^n}(d)(-p))$ is at most $n-1$, 
the codimension of $W_i$ in $x_0^{d-i}H^0(\Gamma, \cO_{\Gamma}(i))$ is at most $n-1$.
By definition of $a_k$, for every $i$ with $k+1 \leq i \leq d$, $n-1 <
{{a_k+1+i} \choose {i}}.$  
Since $\dim Y \geq a_k+1$, by Lemma \ref{codimension} (i), 
there is $f_{i} = x_0^{d-i}g_i$ in $W_i$ 
such that $g_{i}$ does not vanish on $Y$. 
So for every $k+1 \leq i \leq d$, $f_{i} \in W_{X,p} \subset H^0(\PP^n, \cO_{\PP^n}(d)(-p))$, it 
vanishes to order exactly $i$ at $p$, and it is in the image of the map 
$$
\rho: H^0(l, N_{l/\PP^n}(-p)) \to H^0(l, \cO_l(d)(-p)).
$$ 
The argument at the end of the proof of Theorem \ref{main} shows that there is a section of $\cO_l(d)$ which 
vanishes to order 1 at $p$ and can be lifted to a section of $N_{l/\PP^n} (-p)$. Thus  
the dimension of the image of $\rho$ is at least $d-k+1$ and $h^1(l, N_{l/X}(-p)) \leq k-1$, which is a contradiction. 
\end{proof}

The proof of Theorem \ref{lines} yields stronger results if we know the dimension of the linear span of 
$Y$ defined in the proof of the theorem.  
The next proposition gives such a result when $k=1$. 

\begin{proposition}\label{linearspan}
Suppose that $X$ is a general hypersurface of degree $d$ in $\PP^n$ and $\Sigma$ is a cone of lines in $X$ over a curve $Y \subset \PP^{n-1}$. 
If the linear span of $Y$ has dimension $> (n-2)/2$, then a general line  parametrized by $Y$ is free. 
\end{proposition}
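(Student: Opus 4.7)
The plan is to mimic the proof of Theorem \ref{lines} (the case $k=1$), substituting part (ii) of Lemma \ref{codimension} in place of part (i) so as to exploit the hypothesis about the linear span of $Y$ rather than $\dim Y$ alone. Assume for contradiction that a general line $l$ parametrized by $Y$ is non-free, so $h^1(l, N_{l/X}(-1)) \geq 1$. Form the incidence variety of triples $([X'], p, [l])$ in which $l \subset X'$ is a non-free line through $p$ lying in a cone whose base is a curve whose linear span in the appropriate hyperplane has dimension $> (n-2)/2$. Since $X$ is general, pass to an irreducible component whose projection to $\HH$ is dominant. Proposition \ref{summary} then furnishes a subspace $W_{X,p} \subset H^0(\PP^n, \cO_{\PP^n}(d)(-p))$ of codimension at most $n-1$ with the following property: for a general $[l]$ in the family and every $w \in W_{X,p}$, the restriction $w|_l$ lifts to $H^0(l, N_{l/\PP^n}(-p))$ under the map arising from
$$0 \to N_{l/X}(-p) \to N_{l/\PP^n}(-p) \to \cO_l(d)(-p) \to 0.$$

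Choose coordinates so that $p = (1:0:\dots:0)$ and $Y \subset \Gamma := \{x_0 = 0\}$. For $2 \leq i \leq d$, multiplication by $x_0^{d-i}$ embeds $H^0(\Gamma, \cO_\Gamma(i))$ into $H^0(\PP^n, \cO_{\PP^n}(d)(-p))$; set $W_i = W_{X,p} \cap x_0^{d-i} H^0(\Gamma, \cO_\Gamma(i))$, so that $W_i$ has codimension at most $n-1$ inside $x_0^{d-i} H^0(\Gamma, \cO_\Gamma(i))$. Let $s$ denote the dimension of the linear span of $Y$. By Lemma \ref{codimension}(ii), the codimension of sections vanishing on $Y$ in $H^0(\Gamma, \cO_\Gamma(i))$ is at least $si + 1$. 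The hypothesis $s > (n-2)/2$ forces $2s \geq n-1$, so $si + 1 \geq 2s + 1 \geq n > n-1$ for every $i \geq 2$. Hence $W_i$ cannot be contained in the ideal of $Y$, and there exists $x_0^{d-i} g_i \in W_i$ with $g_i$ not vanishing on $Y$. For a general $[l]$ in $Y$, this element vanishes to order exactly $i$ at $p$ and lifts under $\rho: H^0(l, N_{l/\PP^n}(-p)) \to H^0(l, \cO_l(d)(-p))$.

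Combining these with the order-one section supplied by the partial derivatives of the defining equation of $X$ (exactly as in the closing paragraph of the proof of Theorem \ref{main}), we obtain $d$ linearly independent sections in the image of $\rho$, so $\rho$ is surjective. The long exact sequence then forces $H^1(l, N_{l/X}(-p)) = 0$, so $l$ is free, contradicting our assumption. The one subtle point will be verifying that Proposition \ref{summary} legitimately applies to this particular incidence variety; this is routine since the condition that the base of the cone span a linear subvariety of prescribed dimension is constructible, so we may restrict to an irreducible component on which the projection to $\HH$ remains dominant and apply the proposition there.
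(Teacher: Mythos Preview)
Your proposal is correct and follows essentially the same route as the paper's proof: both replace part (i) of Lemma \ref{codimension} by part (ii) in the argument of Theorem \ref{lines}, use the inequality $2s+1>n-1$ (equivalently $2s\geq n-1$) to find for each $2\leq i\leq d$ a section $x_0^{d-i}g_i\in W_i$ with $g_i$ not vanishing on $Y$, append the order-one section coming from the partial derivatives, and conclude that $\rho$ is surjective so $H^1(l,N_{l/X}(-p))=0$. Your write-up is merely more explicit than the paper's about the incidence-variety setup needed to invoke Proposition \ref{summary}.
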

\begin{proof}
The proof is similar to that of Theorem \ref{lines} except that we apply part (ii) of Lemma \ref{codimension} to $Y$. Let $s$ be the dimension of the linear span of $Y$, and let 
$p$ be the vertex of the cone over $Y$. 
Since $X$ is general, there is a subspace $W_{X,p} \subset H^0(X, \cO_X(d)(-p))$ of codimension 
at most $n-1$ such that for every $w\in W$ and a general line $l$ parametrized by $Y$, $w|_l$ 
can be lifted to a section of $N_{l/\PP^n}(-p)$ under the map 
$$ 
\rho: H^0(l,N_{l/\PP^n}(-p)) \to H^0(l,\cO_l(d)(-p)).
$$ 
Let $W_i$ be defined as in the proof of Theorem \ref{lines}. By Lemma \ref{codimension} (ii),  if $2s+1 > n-1$, then for every $2 \leq i \leq d$, there is a section 
$f_i = x_0^{d-i}g_i \in W_{X,p}$ such that $g_i$ does not vanish on $Y$. So $f_i$ has a zero of 
order $i$ at $p$ and is contained in the image of $\rho$. 
The image of $\rho$ also contains a section which has a simple zero at 
$p$ by the same reasoning as in the proof of Theorem \ref{lines}, so $\rho$ is surjective and 
$H^1(l,N_{l/X}(-p))=0$.  

\end{proof}

Assume now that $X$ is a general complete intersection of multidegree $(d_1, \dots, d_m)$ in $\PP^n$, and put $d := d_1 + \dots + d_m$. 
It is likely that a similar strategy as in the proof of \cite[Theorem 2.1]{hrs} could be applied to show 
that if $\sum_i d_i \leq n-1$, the space of lines in $X$ through 
every point of $X$ has dimension equal to 
$n-\sum_i d_i-1$, but here we get a weaker result as an immediate corollary to a generalization of Theorem \ref{lines}.

\begin{proposition}\label{flat}
Let $X \subset \PP^n$ be a general complete intersection of multidegree $(d_1, \dots, d_m)$. Set $d = d_1 +\dots + d_m$.
\begin{itemize}
\item[(i)] 
If there is a family of dimension $r$ of non-free lines through a point $p$ in $X$, then ${{r+2}\choose{2}} \leq n-m$.  
\item[(ii)] If ${{n-d+2}\choose{2}} > n-m$, then the  evaluation map 
$ev: \overline{\M}_{0,1}(X,1) \to X$
is flat and has relative dimension $n-d-1$.  
\end{itemize}
\end{proposition}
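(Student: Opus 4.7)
The plan is to generalize the argument of Theorem \ref{lines} from hypersurfaces to complete intersections, using the codimension bound $n-m$ from Proposition \ref{summary} in place of $n-1$, and then to deduce part (ii) from part (i) via a dimension count on the fibers of $ev$ combined with a Cohen--Macaulay/Matsumura criterion argument.

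For part (i), I would assume for contradiction that for a general complete intersection $X$ there is a point $p\in X$ supporting an $r$-dimensional family of non-free lines with $\binom{r+2}{2}>n-m$. Following the template of Theorem \ref{lines}, set up the incidence variety $B\subset \cU\times \Gr(1,n)$ parametrizing triples $([X],p,[l])$ with $l$ a non-free line through $p$ lying in an irreducible subscheme of $F_{p,1}(X)$ of dimension at least $r$ at $[l]$, and replace $B$ by an irreducible component on which the projection to $\HH$ remains dominant. Proposition \ref{summary} then supplies, for a general such triple, a subspace
$$W_{X,p}\subset H^0(X,N_{X/\PP^n}(-p))=\bigoplus_{j=1}^m H^0(X,\cO_X(d_j)(-p))$$
of codimension at most $n-m$ whose restriction to $l$ lies in the image of the connecting map $\rho:H^0(l,N_{l/\PP^n}(-p))\to \bigoplus_j H^0(l,\cO_l(d_j)(-p))$ attached to the normal-bundle short exact sequence twisted by $\cO_l(-p)$. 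Since $N_{l/\PP^n}(-p)\cong \cO_l^{n-1}$ has vanishing $H^1$, surjectivity of $\rho$ would give $H^1(l,N_{l/X}(-p))=0$, contradicting non-freeness of $l$, so the goal reduces to showing $\rho$ surjects.

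To establish surjectivity, I would filter the target by order of vanishing at $p$. For each $j$, the intersection $W_{X,p}^{(j)}:=W_{X,p}\cap H^0(X,\cO_X(d_j)(-p))$ (the subspace of $W_{X,p}$ supported only in the $j$-th summand) has codimension at most $n-m$ in $H^0(X,\cO_X(d_j)(-p))$ by linear algebra; using the Koszul surjection $H^0(\PP^n,\cO(d_j)(-p))\twoheadrightarrow H^0(X,\cO_X(d_j)(-p))$ and multiplication by $x_0^{d_j-i}$ to embed $H^0(\Gamma,\cO_\Gamma(i))$ (where $\Gamma=\{x_0=0\}$ is a hyperplane missing $p$), Lemma \ref{codimension}(i) applied to the $r$-dimensional trace $Y\subset\Gamma$ of the cone of non-free lines produces, for every $2\leq i\leq d_j$, a section $x_0^{d_j-i}g_i^{(j)}\in W_{X,p}^{(j)}$ whose restriction to $l$ has a zero of order exactly $i$ at $p$ and lies in the $j$-th factor of the target. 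These sections together span the subspace of $\bigoplus_j H^0(l,\cO_l(d_j)(-p))$ on which every component vanishes to order at least $2$ at $p$. For the residual order-$1$ layer, smoothness of $X$ at $p$ implies that the Jacobian matrix $(\partial f_j/\partial x_i)(p)$ has rank $m$, and via the twisted Euler sequence $0\to\cO_l(-1)\to\cO_l^{n+1}\to T_{\PP^n}|_l(-p)\to 0$ (with both $H^1$'s vanishing) any $(a_0,\dots,a_n)\in\C^{n+1}$ lifts to a section of $N_{l/\PP^n}(-p)$ whose image under $\rho$ has evaluation at $p$ equal to the Jacobian image of $(a_i)$ in $\C^m$; surjectivity of that Jacobian then realizes every value in $\C^m$. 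The short exact sequence
$$0\to \bigoplus_j H^0(l,\cO_l(d_j)(-2p))\to \bigoplus_j H^0(l,\cO_l(d_j)(-p))\to \C^m\to 0$$
now shows that the image of $\rho$ is everything, yielding the contradiction. The main obstacle is the bookkeeping around the multi-factor structure of $N_{X/\PP^n}$, specifically producing sections ``concentrated in a single summand'' in both the order $\geq 2$ regime (handled by the intersection $W_{X,p}^{(j)}$) and at order $1$ (handled by the multi-dimensional Jacobian surjection).

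For part (ii), the fiber $ev^{-1}(p)$ is identified with the scheme $F_p(X)$ of lines in $X$ through $p$, cut out in $\PP^{n-1}$ by $d$ equations (as in Lemma \ref{nonfreelines}), so $\dim F_p(X)\geq n-d-1$. If some irreducible component had dimension $r+1>n-d-1$, then the bound $\dim_{[l]}F_p(X)\leq h^0(l,N_{l/X}(-p))$ together with $\chi(N_{l/X}(-p))=n-d-1$ would force every $l$ in that component to satisfy $h^1(l,N_{l/X}(-p))>0$ and hence to be non-free, producing an $(r+1)$-dimensional family of non-free lines through $p$ with $r+1\geq n-d$; part (i) would then give $\binom{r+3}{2}\leq n-m$, contradicting $\binom{n-d+2}{2}>n-m$. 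Thus every fiber of $ev$ is a complete intersection of dimension exactly $n-d-1$, so Cohen--Macaulay. Since $\overline{\M}_{0,1}(X,1)$ is the zero scheme of a section of a rank-$(d+m)$ locally free sheaf on the smooth $(2n-1)$-dimensional stack $\overline{\M}_{0,1}(\PP^n,1)$ and its dimension $\dim X+(n-d-1)=2n-d-m-1$ matches the expected one, it is a local complete intersection, hence Cohen--Macaulay. By Matsumura's criterion \cite[Theorem 23.1]{matsumura}, a morphism from a Cohen--Macaulay scheme to the smooth scheme $X$ with constant fiber dimension is flat, completing the proof.
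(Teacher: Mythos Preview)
Your proposal is correct and follows essentially the same approach as the paper: set up the incidence variety, apply Proposition~\ref{summary} to obtain $W_{X,p}$ of codimension $\leq n-m$, intersect with each summand and use Lemma~\ref{codimension}(i) to produce sections of every vanishing order $\geq 2$ at $p$, handle order~$1$ via the Euler sequence and surjectivity of the Jacobian, and deduce (ii) from (i) by the constant-fiber-dimension flatness criterion. The paper phrases the order-$1$ step slightly differently (it produces, for each $j$, a section with a simple zero in the $j$-th slot and order $\geq 2$ in the others, rather than invoking the short exact sequence with quotient $\mathbb{C}^m$), but the content is the same.
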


\begin{proof}
(i) The proof is similar to the proof of Theorem \ref{main}. Let $s$ be the dimension of the largest family of non-free lines in a general complete 
intersection of multidegree $(d_1, \dots, d_m)$ which all pass through the 
same point. We show ${{s+2}\choose{2}} \leq n-m$. Assume to the contrary that the inequality does not hold. 
Let $\mathcal H$ denote the variety parametrizing complete intersections of multidegree $(d_1, \dots, d_m)$ in 
$\PP^n$, and let $\mathcal U$ denote 
the universal family over $\mathcal H$. Let $B \subset \mathcal U \times \Gr(1,n)$ 
be the subvariety parametrizing triples $([X],p,[l])$ such that $l$
belongs to a $s$-dimensional family  
of  lines through $p$ in $X$, and let $([X],p)$ be a general 
point in the image of the projection map $\phi: B \to \mathcal U$. 

The existence of a subspace 
$W_{X,p} \subset H^0(X,\cO_X(-p))$ of codimension at most $n-m$ as in the proof of Theorem \ref{lines} and the assumption that   ${{s+2}\choose{2}} > n-m$ 
show that if $([X],p,l)$ is a general point in $\phi^{-1}([X],p)$, then for every 
$1 \leq j \leq m$, and every $k \geq 2$, there is a global section $f_{j,k}$ of 
$N_{X/\PP^n}=\oplus_{i=1}^m\cO_{l}(d_i) $ such that 

\begin{itemize}
\item[(1)] The $i$-th component of $f_{j,k}$ is zero for $i \neq j$, 
\item[(2)] The $j$-th component of $f_{j,k}$ has a zero of order equal to $k$ at $p$,
\item[(3)] $f_{j,k}$ can be lifted to a section of $N_{l/\PP^n}(-p)$ via the map obtained from the exact sequence:
$$
0 \to N_{l/X}(-p) \to N_{l/\PP^n}(-p) \to \oplus_{i=1}^m\cO_{l}(d_i)(-p) \to 0.
$$  
\end{itemize}
On the other hand, the surjectivity of the map of sheaves $\cO_l(1)^{n+1} \to \oplus_{i=1}^m\cO_{l}(d_i)$ implies that for each $1 \leq j \leq m$, there is a 
global section $f_j$ of  $\oplus_{i=1}^m\cO_{l}(d_i)$ whose $j$-th component has a simple zero, 
and whose other components have zeros of order at least 2 at $p$,  
such that $f_j$ is in the image of the map 
$$H^0(l, \cO_{l}(1)^{n+1}(-p)) \to  H^0(l,\oplus_{i=1}^m\cO_{l}(d_i)(-p)) \subset H^0(l,  \oplus_{i=1}^m\cO_{l}(d_i)) .$$
This shows that the map $H^0(l, N_{l/X}(-p)) \to  H^0(l,\oplus_{i=1}^m\cO_{l}(d_i)(-p))$ is surjective, 
a contradiction. 

(ii) As  was shown in the proof of 
Theorem \ref{main}, to prove the flatness of $ev$, it suffices 
to show that the fibers of $ev$ have constant dimension $n-d-1$. But 
the fibers of $ev$ have dimension 
at least $n-d-1$, and if there is an irreducible component $\M$ of $ev^{-1}(p)$ whose dimension is larger than $n-d-1$, then 
every line parametrized by $\M$ should be non-free. This is not possible by part (i). 
\end{proof}
\bigskip

\section{Dimension and irreducibility of $\overline{\M}_{0,0}(X,e)$}

Suppose that $X$ is a complete intersection of multidegree $(d_1, \dots, d_m)$ in $\PP^n$, and set  $d : = d_1 + \dots + d_m$. If $d \leq n$, then 
the {\em threshold degree} of $X$ is defined to be 
$$E(X) : = \left\lfloor \frac{n+1}{n+1-d} \right\rfloor.$$ 

\begin{theorem}\label{last}
Let $X \subset \PP^n$ be a general complete intersection of multidegree $(d_1,\dots, d_m)$. If $d < 2n/3, n \geq 20$, then 
the evaluation map $ev: \overline{\M}_{0,1}(X,e) \to X$ is flat and has relative dimension 
$e(n+1-d)-2$ for every $e \geq 1$.
\end{theorem}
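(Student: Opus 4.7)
The plan is to bootstrap from base cases $e=1,2$ by induction on $e$, following the strategy of \cite{hrs} with the sharper $e=2$ input supplied by Theorem \ref{main} and its complete-intersection extension announced in the introduction.

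For the base cases, Proposition \ref{flat}(ii) handles $e=1$ under $\binom{n-d+2}{2}>n-m$, while the complete-intersection analogue of Theorem \ref{main} handles $e=2$ under $\binom{n-d}{2}>n-m$. Under the hypothesis $d<2n/3$ with $n\geq 20$ we have $n-d\geq 7$, hence $\binom{n-d}{2}\geq 21\geq n-1\geq n-m$, and both inequalities gain slack as $n$ grows. The proof of the $e=2$ extension parallels the hypersurface case in Section 5: by Proposition \ref{summary} one obtains a subspace $W_{X,p}\subset H^0(X,N_{X/\PP^n}(-p))$ of codimension at most $n-m$ whose pullback to $C$ lifts to sections of $N_{C/\PP^n}(-p)$; Proposition \ref{zero} applied to each summand $\cO_C(d_i)$, combined with the simple-zero section manufactured from the Jacobian of the defining equations (as at the end of Section 5), forces the connecting map $H^0(C,N_{C/\PP^n}(-p))\to H^0(C,\oplus_i\cO_C(d_i)(-p))$ to be surjective, and hence $H^1(C,N_{C/X}(-p))=0$, contradicting the assumption of a larger-than-expected component.

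For the inductive step, fix $e\geq 3$ and assume flatness at all smaller degrees. Let $\M\subset ev^{-1}(p)$ be an irreducible component; since $\overline{\M}_{0,1}(X,e)$ is cut out by a section of a rank-$(de+m)$ bundle on the smooth stack $\overline{\M}_{0,1}(\PP^n,e)$, we have $\dim\M\geq e(n+1-d)-2$, and we must show equality. If the generic stable map in $\M$ has reducible domain $C=C_1\cup C_2$ of degrees $e_1+e_2=e$ with $p\in f(C_1)$, then by induction the $1$-pointed maps of degree $e_1$ sending the marked point to $p$ form a space of dimension $e_1(n+1-d)-2$, varying the node on $C_1$ adds one parameter, and attaching a degree-$e_2$ $1$-pointed map at the node (again by induction) contributes $e_2(n+1-d)-2$, totaling $e(n+1-d)-3<\dim\M$, a contradiction. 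Hence the generic map in $\M$ is $f:\PP^1\to X$ with irreducible domain.

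It remains to exclude the case where $\M$ generically parametrizes irreducible maps. The Zariski tangent space to $ev^{-1}(p)$ at $[f]$ is $H^0(\PP^1,f^*T_X(-q))$, so $\dim\M>e(n+1-d)-2$ forces $H^1(\PP^1,f^*T_X(-q))\neq 0$ and $f$ non-free. Here I would invoke the deformation-theoretic and bend-and-break arguments of \cite[Section 5]{hrs}: a positive-dimensional family of non-free degree-$e$ maps through $p$ admits a one-parameter degeneration to a reducible stable map, returning to the already-excluded reducible case. The main obstacle is verifying that this final step transfers to the complete-intersection setting and to the weaker numerical range $d<2n/3$; the key observation is that the only part of the HRS induction that was numerically sharp in $d$ was the $e=2$ case, which is precisely the case improved by Theorem \ref{main}, so the general inductive machinery should carry over without essential change.
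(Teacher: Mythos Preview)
Your base cases $e=1,2$ are handled correctly and match the paper's argument. The gap is in your inductive step for $e\geq 3$: the irreducible non-free case is not actually proved, only gestured at (``Here I would invoke\dots'', ``The main obstacle is verifying\dots''). Bend-and-break does not by itself show that a family of non-free maps through $p$ of larger-than-expected dimension degenerates into the reducible boundary in a way that contradicts your dimension count; making this precise is exactly the content of \cite[Section~5]{hrs}, and you have not reproduced it.

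More importantly, the paper avoids this entire induction by a simple numerical observation you missed: the hypothesis $d<2n/3$ forces the threshold degree $E(X)=\lfloor (n+1)/(n+1-d)\rfloor\leq 2$, since $(n+1)/(n+1-d)<3(n+1)/(n+3)<3$. The result \cite[Corollary~5.5]{hrs} is a black box stating that flatness of $ev$ for all $1\leq e\leq E(X)$ implies flatness for all $e\geq 1$. So once $e=1,2$ are checked (exactly as you did), the theorem follows immediately with no further induction. This is why the specific bound $d<2n/3$ appears in the statement: it is chosen precisely to make $E(X)\leq 2$, reducing everything to the two base cases. Your attempt to redo the HRS induction by hand is both unnecessary and, as written, incomplete.
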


\begin{proof}
By \cite[Corollary 5.5]{hrs}, if the evaluation map $ev: \overline{\M}_{0,1}(X,e) \to X$ is flat of relative dimension 
$e(n+1-d)-2$ for every $1 \leq e \leq E(X)$, then it is 
flat of relative dimension $e(n+1-d)-2$ for every $e \geq 1$. 
If $d < \frac{2n}{3}$, then  $E(X) \leq 2$, so to prove the statement it is enough to prove it 
for $e=1,2$. If $e=1$,  our assumptions on $n$ and $d$ imply that ${{n-d+2}\choose{2}} > n-m$, so by Proposition \ref{flat}, $ev$ 
is flat of relative dimension $n-d-1$. 
If $e=2$, and $n,d$ satisfy the given inequalities, then $n-1<  {{n-d} \choose {2}}$, 
so Theorem \ref{main} shows that $ev$ is flat of the expected relative dimension when $X$ is a hypersurface. The same proof 
can be extended to the case of general complete intersections with $n-m<  {{n-d} \choose {2}}$. 
\end{proof}

\begin{theorem}
With the same assumptions as in Theorem \ref{last}, 
$\overline{\M}_{0,0}(X,e)$ is an irreducible complete intersection stack of dimension $e(n+1-d)+n-m-3$ for every $e \geq 1$.
\end{theorem}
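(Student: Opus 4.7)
The plan is to deduce this from Theorem \ref{last} by the same argument that takes Theorem \ref{main} plus the results of \cite{hrs} to Theorem \ref{maincor}(b).

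First, Theorem \ref{last} gives flatness of $ev:\overline{\M}_{0,1}(X,e)\to X$ of relative dimension $e(n+1-d)-2$, hence $\dim\overline{\M}_{0,1}(X,e)=e(n+1-d)+n-m-2$ and consequently $\dim\overline{\M}_{0,0}(X,e)=e(n+1-d)+n-m-3$. Since this matches the expected dimension and $\overline{\M}_{0,0}(X,e)$ is cut out by a section of a rank-$(de+m)$ locally free sheaf on the smooth stack $\overline{\M}_{0,0}(\PP^n,e)$ of dimension $(e+1)(n+1)-4$, the codimension equals the rank and the stack is a local complete intersection, as recorded in Section 2.1.

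For irreducibility I would argue by induction on $e$. The base case $e=1$ is the irreducibility of the Fano scheme of lines on a general complete intersection with $\sum d_i\leq n-1$ (see e.g.\ \cite{debarre}). For $e\geq 2$, assume the statement for all $e'<e$. By Lemma \ref{expected}, every component of $\overline{\M}_{0,0}(X,e)$ contains a free embedded smooth rational curve of degree $e$ in its smooth locus. The boundary divisors $\Delta_{e_1,e_2}\cong\overline{\M}_{0,1}(X,e_1)\times_X \overline{\M}_{0,1}(X,e_2)$ (with $e_1+e_2=e$, $e_i\geq 1$) are irreducible of the expected codimension one, because each factor is irreducible by induction and flat over $X$ of the correct relative dimension by Theorem \ref{last}. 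Following \cite[\S6]{hrs}, a free curve in any candidate component degenerates to a nodal chain in $\Delta_{e_1,e_2}$ via Koll\'ar's smoothing theorem \cite[Theorem 7.6]{kollar}, so every component contains $\Delta_{e_1,e_2}$ in its closure, forcing the components to coincide.

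The main obstacle is the smoothing-to-boundary step. In the hypersurface case this is handled in \cite{hrs} by a direct deformation analysis; the argument transports essentially verbatim to complete intersections once flatness and the l.c.i.\ property are in place, because the obstruction bundle is simply $f^*\bigoplus_i\cO_X(d_i)$ instead of $f^*\cO_X(d)$, with no additional cohomological vanishing required beyond what Corollary \ref{normal} and Theorem \ref{last} already provide.
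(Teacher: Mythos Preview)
Your dimension and local complete intersection argument is correct and matches the paper's.

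The irreducibility argument has genuine gaps. First, irreducibility of $\Delta_{e_1,e_2}=\overline{\M}_{0,1}(X,e_1)\times_X\overline{\M}_{0,1}(X,e_2)$ does not follow from irreducibility of the two factors together with flatness over $X$: a fiber product of irreducible flat $X$-schemes can easily be reducible. You need the general fiber of at least one factor over $X$ to be irreducible, and this is exactly the condition the paper verifies separately for $ev:\overline{\M}_{0,1}(X,1)\to X$; you have not addressed it. Second, Koll\'ar's theorem \cite[II, Theorem 7.6]{kollar} smooths nodal curves to irreducible ones, not the reverse; it does not show that a given component of $\overline{\M}_{0,0}(X,e)$ meets the boundary. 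That step requires bend-and-break or the specific machinery in \cite[\S6]{hrs}. Finally, Lemma \ref{expected} produces only \emph{one} component containing a free curve; it does not assert this for every component (though that does follow, by a different argument, once flatness of $ev$ is known).

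The paper takes a different route. It applies \cite[Corollary 6.9]{hrs} directly, which reduces irreducibility for all $e$ to four conditions: flatness of $ev$, irreducibility of its general fibers, existence of a free line, and irreducibility of $\M_{0,0}(X,e)$ for $e\le E(X)=2$. The first three are dispatched quickly (the second via Corollary \ref{normal} and the fact that fibers of $ev$ for lines are positive-dimensional complete intersections in $\PP^{n-1}$). The substantive work is the base case $e=2$: the paper cites \cite{deland} for hypersurfaces and then gives a separate argument for complete intersections, using the incidence correspondence in $\Hilb_{2t+1}(\PP^n)\times\HH$ together with \cite[Lemma 3.2]{jason-johan}, applied with the locus of non-reduced conics as the auxiliary irreducible subscheme. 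Your induction, starting from $e=1$ alone, attempts to bypass this $e=2$ analysis, but the gaps above show that an input of comparable strength is still required.
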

\begin{proof}
By the previous theorem, $$\dim \overline{\M}_{0,0}(X,e) = \dim \overline{\M}_{0,1}(X,e)-1 = e(n+1-d)+n-m-3.$$ The stack  
$\overline{\M}_{0,0}(X,e)$ is the zero locus of a section of a locally free sheaf of rank $de+m$ over the smooth stack $\overline{\M}_{0,0}(\PP^n,e)$
 (see Section 2.1). Since $\dim \overline{\M}_{0,0}(\PP^n,e)= (e+1)(n+1)-4$, and since $\overline{\M}_{0,0}(X,e)$ 
has the expected dimension $= \dim  \overline{\M}_{0,0}(\PP^n,e) - (de+m)$, it is a local complete intersection stack.

Next we prove that $\overline{\M}_{0,0}(X,e)$ is irreducible. By \cite[Corollary 6.9]{hrs}, if $X$ is a smooth complete intersection, then $\overline{\M}_{0,0}(X,e)$ is irreducible for every $e \geq 1$ if 
 \begin{itemize}
 \item[(i)] The evaluation map $ev: \overline{\M}_{0,1}(X,e) \to X$ is flat of relative dimension $e(n+1-d)-2$ for every $e \geq 1$.
 \item[(ii)] General fibers of $ev$ are irreducible. 
 \item[(iii)] There is a free line in $X$.
 \item[(iv)] $\M_{0,0}(X,e)$ is irreducible for every $1 \leq e \leq E(X)$ where $\M_{0,0}(X,e)$ denotes the stack of stable maps 
 of degree $e$ with irreducible domains. 
 \end{itemize}
 
 By Theorem \ref{last} the first property is satisfied,  and property (iii)  
holds for every smooth complete intersection which is covered by lines, that is every smooth 
complete intersection with $d \leq n-1$. 
By Corollary 
\ref{normal}, for every line $l$ in $X$, $H^1(l,N_{l/X})=0$, so
$\overline{\M}_{0,0}(X,1)$  
and hence $\overline{\M}_{0,1}(X,1)$ are smooth. Therefore, by generic smoothness, a general fiber of 
$ev: \overline{\M}_{0,1}(X,1) \to X$ is smooth. Since every fiber of this map has the expected dimension $n-d-1$, it is a complete intersection 
of dimension $\geq 1$ in $\PP^{n-1}$, so it is also connected and therefore irreducible.

To show property (iv) holds, we need to show  $\M_{0,0}(X,e)$ is irreducible for $e=1,2$. $\M_{0,0}(X,1)$ is the space of lines in $X$ 
which is irreducible by (\cite[V.4.3]{kollar}) when $X$ is a smooth hypersurface of degree $\leq 2n-4$ in $\PP^n$, $n\geq 4$. 
The same proof can be generalized to the case of complete intersections. The irreducibility of the space of lines in general complete intersections 
with $d \leq n-1$ is also proved in \cite[Corollary 4.5]{jason-johan}. 

It is proved in \cite{deland} that $\M_{0,0}(X,2)$ is irreducible for 
a general hypersurface of degree $\leq n-2$ in $\PP^n$. Let us explain how one can generalizes the same argument to 
the case of general complete intersections with $d \leq n-2$. Note that since the dimension of $\M_{0,0}(X,2)$ is $3n-2d-m-1$, and since 
the space of lines passing through any point of $X$ has dimension $n-d-1$, the space of reducible conics in $X$ has dimension 
$$ \dim \M_{0,0}(X,1) + 1 + n-d-1 = 3n-2d-m-2 < \dim \M_{0,0}(X,2),$$
and the space of double covers of lines in $X$ has dimension
$$ \dim \M_{0,0}(X, 1) + 2 = 2n-d-m < 3n-2d-m-1,$$
every irreducible component of $\M_{0,0}(X,2)$ contains an open subscheme parametrizing smooth embedded 
conics in X. Therefore, to prove $\M_{0,0}(X,2)$ is irreducible, it is enough to show that $\Hilb_{2t+1}(X)$ is irreducible. To this end, 
let $I \subset \Hilb_{2t+1}(\PP^n) \times \mathcal H$ be the incidence correspondence parametrizing 
pairs $([C], [X])$ such that $C$ is a conic in $X$, and let 
$\pi_1: I \to  \Hilb_{2t+1}(\PP^n)$ and $\pi_2: I \to \mathcal H$ denote the two projection maps. Since $\Hilb_{2t+1}(\PP^n)$ is smooth and irreducible, and since 
the fibers of $\pi_1$ are product of projective spaces, $I$ is smooth and irreducible.  

Let $J$ be the closed subscheme of $I$ parametrizing pairs $([C],[X])$ such that $C$ is a non-reduced conic, so the support of $C$ is a line in $X$.
Then $J$ is smooth and irreducible since $J$ maps to the Grassmannian of lines in $\PP^n$ and the fibers are smooth and irreducible.  Let $\pi_2': 
J \to \mathcal H$ be the projection map. Note that for any smooth complete intersection $X$ and $l \subset X$, the space of non-reduced 
conics in $X$ whose support is $l$ can be identified with
$\PP(H^0(l,N_{l/X}(-1)))$. If $[X] \in \mathcal H$ is general, then the space of lines in $X$ is irreducible, thus  
the fiber of $\pi'_2$ over $[X]$ is connected. By generic smoothness, $\pi_2'^{-1}([X])$ is smooth and therefore irreducible.  

By \cite[Lemma 3.2]{jason-johan}, if $i:N \to M$ and $e: M \to Y$ are morphisms of irreducible schemes and 
$i$ maps the generic point of $N$ to a normal point of $M$, then $e$ has 
irreducible general fibers provided that $e \circ i$ is dominant with irreducible general fibers. We apply this result to $N=J$, $M=I$, $Y = \mathcal H$, $i=$ the inclusion map, and $e=\pi_2$. Since $d \leq n-2, h^0(l, N_{l/X}(-1)) \geq 1$ for any smooth $X$ parametrized by $\mathcal H$ and any line $l \subset X$, so $e \circ i = \pi_2'$ is dominant and we have 
shown its general fibers are irreducible. Since $I$ is smooth, a general fiber of $\pi_2$ should be irreducible.

\end{proof}


\bigskip
\newcommand{\closer}{\vspace{-1.5ex}}

\end{document}